\newtheorem{pro}{Proposition}[section]
\newtheorem{teo}[pro]{Theorem}
\newtheorem{defi}[pro]{Definition}
\newtheorem{lem}[pro]{Lemma}
\newtheorem{cor}[pro]{Corollary}
\newtheorem{rk}[pro]{Remark}
\newtheorem{ex}[pro]{Example}
\newcommand{\Ext}{\mathrm{Ext}}
\newcommand{\Tor}{\mathrm{Tor}}
\newcommand{\Hom}{\mathrm{Hom}}
\newcommand{\A}{\mathcal{A}}
\newcommand{\B}{\mathcal{B}}
\newcommand{\I}{\mathcal{I}}
\newcommand{\C}{\mathcal{C}}
\newcommand{\F}{\mathcal{F}}
\newcommand{\Le}{\mathcal{L}}
\newcommand{\X}{\mathcal{X}}
\newcommand{\Y}{\mathcal{Y}}
\newcommand{\Z}{\mathcal{Z}}
\newcommand{\pd}{\mathrm{pd}}
\newcommand{\op}{\mathrm{op}}
\newcommand{\Gpd}{\mathrm{Gpd}}
\newcommand{\Proj}{\mathcal{P}}
\newcommand{\Inj}{\mathcal{I}}
\newcommand{\id}{\mathrm{id}}
\newcommand{\Gid}{\mathrm{Gid}}
\newcommand{\resdim}{\mathrm{resdim}}
\newcommand{\coresdim}{\mathrm{coresdim}}
\newcommand{\FP}{\mathrm{FP}}
\newcommand{\Modu}{\mathrm{Mod}}
\newcommand{\Ker}{\mathrm{Ker}}
\newcommand{\Ima}{\mathrm{Im}}
\newcommand{\GP}{\mathcal{GP}}
\newcommand{\GF}{\mathcal{GF}}
\newcommand{\Flat}{\mathcal{F}}
\newcommand{\glGPD}{\mathrm{gl.GPD}}
\newcommand{\glGID}{\mathrm{gl.GID}}
\newcommand{\fd}{\mathrm{fd}}
\newcommand{\Gfd}{\mathrm{Gfd}}
\newcommand{\GI}{\mathcal{GI}}
\newcommand{\Coker}{\mathrm{CoKer}}
\newcommand{\cogorro}{\vee}
\newcommand{\ortogonal}{\bot}
\newcommand{\gorro}{\wedge}
\begin{document}
\title[Gorenstein $\mathcal{FP}_n$-flat modules and weak global dimensions]{Gorenstein $\mathcal{FP}_n$-flat modules and weak global dimensions}

\author{V\'ictor Becerril}
\address[V. Becerril]{Centro de Ciencias Matem\'aticas. Universidad Nacional Aut\'onoma de M\'exico. 
 CP58089. Morelia, Michoac\'an, M\'EXICO}
\email{victorbecerril@matmor.unam.mx}
\thanks{2010 {\it{Mathematics Subject Classification}}. Primary 18G10, 18G20, 18G25. Secondary 16E10.}
\thanks{Key Words: Gorenstein $n$-coherent, $\FP _n$-injective, Gorenstein weak global dimension, Balanced pair}
\begin{abstract} 
In this paper  we characterize the relative Gorenstein weak global  dimension of the  Gorenstein $\B$-flat $R$-modules and projectively coresolved Gorenstein $\B$-flat  $R$-modules recently studied by S. Estrada, A. Iacob, and M. A. P\'erez, which are a relativisation of the ones introduced by J. \v{S}aroch and J. \v{S}t'ov\'ich\v{e}k. As application we prove that the weak global dimension with respect to the  Gorenstein $\mathrm{FP}_n$-flat $R$-modules is finite over a Gorenstein $n$-coherent ring $R$ and in this case coincides with the flat dimension of the right $\mathrm{FP}_n$-injective $R$-modules. This result extends the known  for Gorenstein flat modules over Iwanaga-Gorenstein and Ding-Chen rings. We also show that there is a close relationship between the relative global dimension of the Gorenstein $\mathrm{FP}_n$-projectives  and the Gorenstein weak global  dimension respect to the class of Gorenstein $\mathrm{FP}_n$-flat $R$-modules.  We also get an hereditary and complete cotorsion triple and consequently a balanced pair. 
\end{abstract}  
\maketitle

\section{Introduction} 

E. E. Enochs, O. M. G.  Jenda and B. Torrecillas \cite{En95, En93} introduced Gorenstein projective, injective and flat $R$-modules for any ring $R$ and since then a Gorenstein homological algebra has been developing intensively. Most recently, there has been an increasing interest for generalizations of this families of modules. Among them are the Ding projective and Ding injective modules presented by L. X. Mao, N. Q. Ding and J. Gillespie  \cite{Gill10,Mao}. Also the named Gorenstein AC-projective and Gorenstein AC-injective modules are presented by D. Bravo, J. Gillespie and M. Hovey in \cite{BGH}. Later are introduced the Gorenstein AC-flat $R$-modules by D. Bravo, S. Estrada and  A. Iacob in \cite{BEI}.  More recently, these notions are encompassed by A. Iacob  in \cite{Alina20}, with the name of Gorenstein $\mathrm{FP}_n$-projective and Gorenstein $\mathrm{FP}_n$-injective. Under this impetus of generality in this paper we will characterize the Gorenstein weak global dimension relative to the class of the Gorenstein $ \mathcal{B}$-flat $R$-modules (where $\mathcal{B}$ a class of right $R$-modules) with application to the  corresponding class of  Gorenstein $\mathrm{FP}_n$-flat modules studied in \cite[Example 2.21 (3)]{Estrada20},  but not limited to such family \cite[Definition 2.1 (1)]{Estrada20}. We know from J. Wang and X. Zhang \cite[Lemma 1.2]{Wang23} that a ring $R$ is Gorenstein (resp. Ding-Chen) if and only if $R$ is Noetherian on both sides  (resp. coherent) with finite Gorenstein weak global dimension. Actually the Gorenstein weak global dimension has been characterized by I. Emmanouil \cite{Emmanouil} assuming finite.  Without  conditions of finiteness was proved by L. W. Christensen, S. Estrada and P. Thompson that such dimension is  symmetric  \cite{Chris-Estrada}. In what follows we prove that over $R$ a Gorenstein $n$-coherent ring \cite[\S 4]{Wang18} the left Gorenstein weak global dimension relative to the class of  Gorenstein $\mathrm{FP}_n$-flat $R$-modules, denoted $\GF_{ \mathcal{FP}_k\mbox{-}Inj (R^{\op})}$ is always finite, and for each $k \in \{0 , 1, \dots , n  \}$ coincides with the flat dimension of the class $\mathcal{FP}_k \mbox{-}Inj (R^{\op})$.  We will also see that for the Gorenstein $\mathrm{FP}_n$-flat $R$-modules the symmetry of their relative Gorenstein weak global dimensions makes sense. We also show a strong relationship between the global dimensions of the classes of projectively coresolved Gorenstein $\mathrm{FP}_n$-flat $R$-modules $\Proj\GF _{\mathcal{FP}_n \mbox{-}Inj (R^{\op})}$,   Gorenstein $\mathrm{FP}_n$-projectives  $\GP _{\mathcal{FP}_n \mbox{-}Flat (R)}$ and  Gorenstein $\mathrm{FP}_n$-injectives $\GI _{\mathcal{FP}_n \mbox{-}Inj (R)}$ by assuming conditions over the Gorenstein weak global  dimension with respect to the  Gorenstein $\mathrm{FP}_n$-flat $R$-modules $\GF _{\mathcal{FP}_n \mbox{-}Inj (R^{\op})}$, which we find quite interesting. We also give a  relativisation of the notion  of \textit{Virtually Gorenstein Ring}, by obtaining  an hereditary and complete cotorsion triple, which gives us a  balanced pair in the sense of X.-W. Chen \cite{Chen}.

\section{Preliminaries} 
In what follows, we shall work with categories of modules over an associative ring $R$ with identity. By $\Modu (R)$ and $\Modu(R^{\rm op})$ we denote the categories of left and right $R$-modules. 

\textit{Distinguished classes.} Projective, injective and flat $R$-modules will be important to present some definitions, remarks and examples. The classes of projective left and right $R$-modules will be denoted by $\mathcal{P}(R)$ and $\mathcal{P}(R^{\rm op})$, respectively. Similarly, we shall use the notations $\mathcal{I}(R)$, $\mathcal{I}(R^{\rm op})$, $\mathcal{F}(R)$ and $\mathcal{F}(R^{\rm op})$ for the classes of injective and flat modules in $\Modu(R)$ and $\Modu(R^{\rm op})$, respectively.

Concerning functors defined on $R$-modules, $\Ext ^i _R(-,-)$ denotes the right $i$-th derived functor of $\Hom_R(-,-)$. If $M \in \Modu(R^{\rm op})$ and $N \in \Modu(R)$, $M \otimes_R N$ denotes the tensor product of $M$ and $N$.

 Let $\X $ be  a class of $R$-modules and  $M \in \Modu(R)$. We set the following notation:\\
 \textit{Orthogonal classes}.  For each positive integer $i$, we consider the right orthogonal classes 
$$\X^{\ortogonal _i} := \{N \in \Modu(R): \Ext ^{i} _{R} (-,N) | _{\X} =0\} \mbox{ and } \X ^{\ortogonal} := \cap _{i >0} \X^{\ortogonal _i}.$$
Dually, we have the left orthogonal classes $^{\ortogonal _i} \X$ and $^{\ortogonal} \X$. Given a class $\Y\subseteq \Modu(R)$, we write $\X \ortogonal \Y$ whenever $\Ext ^{i} _{R} (X, Y) =0$ for all $X \in \X$,  $Y \in \Y$ and $i >0$.

 The relative projective dimension of $M \in \Modu(R)$, with respect to $\X$, is defined as
$$\pd _{\X} (M) : = \min \{n \in \mathbb{N}: \Ext ^{j} _{R} (M,-) | _{\X} =0 \mbox{ for all } j>n\}.$$
We set $\min \emptyset := \infty$. Dually, we denote by $\id _{\X} (M)$ the relative injective dimension of $M$ with respect to $\X$. Furthermore,  we set  
$$\pd _{\X} (\Y) := \sup\{ \pd _{\X} (Y): Y \in \Y\} \mbox{ and } \id _{\X} (\Y) := \sup \{\id _{\X} (Y): Y \in \Y\}. $$
If $\X = \Modu(R)$, we just write $\pd (\Y)$ and $\id (\Y)$. Similarly with the $i$-th derived functor of $-\otimes _R -$, the notation $\fd (\X)$, means the flat dimension of the class $\X$, this is $\sup \{\fd (X) : X \in \X \}$.

 \textit{Resolution and coresolution dimension}. The \textbf{$\X$-coresolution dimension of $M $}, denoted  by $\coresdim _{\X} (M)$, is the smallest non-negative integer $n$ such that there is an exact sequence 
$$0 \to M \to X_0 \to X_1 \to \cdots \to X_n \to 0,$$ 
with $X_i \in \X$ for all $i \in \{0, \dots , n\}$. If such $n$ does not exist, we set $\coresdim _{\X} (M) := \infty$. Also, we denote by $\X^{\cogorro } _n$ the class of $R$-modules with $\X$-coresolution dimension at most $n$. The union $\X ^{\cogorro} := \cup _{n \geq 0} \X ^{\cogorro} _n$ is the class of $R$-modules with finite $\X$-coresolution dimension.
Dually, we have the $\X$-{\bf{resolution dimension}}  of $M$, denoted by $\resdim_\X\,(M)$,   $\mathcal{X}^{\wedge} _n$  the class $R$-modules having  $\mathcal{X}$-resolution dimension at most $n$, and the union $\X ^{\gorro} := \cup _{n \geq 0} \X ^{\gorro} _n$ is the class of $R$-modules with finite $\X$-resolution dimension. We set  
$$\coresdim_\X\,(\Y):=\mathrm{sup}\,\{\coresdim_\X\,(Y)\;
:\; Y\in\Y\},$$ and  $\resdim_\X\,(\Y)$ is defined dually.

\textit{Relative Gorenstein $R$-modules.} Recall that given a class $\mathcal{B} \subseteq \Modu (R^{\op})$ a left $R$-module $G$ is said to be
\begin{itemize}
 \item[(a)] \textbf{Gorenstein $\mathcal{B}$-flat} \cite[Definition 2.1]{Estrada20}, if it is isomorphic to $\Ker (F^0 \to F^1) $ of an exact complex of  flat left $R$-modules 
$$ \cdots \to F_1 \to F_0 \to F^0 \to F^1 \to \cdots$$
which remains acyclic whenever that the functor $B \otimes _R -$ is applied to it for any $B \in \mathcal{B}$. The class of all Gorenstein $\mathcal{B}$-flat $R$-modules is denoted by $\GF _{\mathcal{B}} (R)$. 
\item[(b)]  \textbf{Projectively coresolved Gorenstein $\mathcal{B}$-flat}  \cite[Definition 2.6]{Estrada20}, if it is isomorphic to $\Ker (P^0 \to P^1) $ of an exact complex of projective left $R$-modules 
$$ \cdots \to P_1 \to P_0 \to P^0 \to P^1 \to \cdots$$
which remains acyclic whenever that the functor $B \otimes _R -$ is applied to it for any $B \in \mathcal{B}$. The class of all projectively coresolved Gorenstein $\mathcal{B}$-flat modules is denoted by $\Proj\GF _{\mathcal{B}} (R)$. 
\end{itemize}

For any $M \in \Modu (R)$ we define the  \textbf{Gorenstein $\mathcal{B}$-flat dimension of $M$}, denoted  $\Gfd _{\mathcal{B}} (M)$ as $\resdim_{\GF _{\mathcal{B}} } (M)$. Similarly we declare the \textbf{projectively coresolved Gorenstein $\mathcal{B}$-flat dimension of $M$}, denoted  $P\Gfd _{\mathcal{B}} (M)$ as $\resdim _{\Proj\GF _{\mathcal{B}}} (M)$. Monomorphism and epimorphism in $\Modu(R)$ may sometimes be denoted using arrows $\hookrightarrow$ and $\twoheadrightarrow$, respectively.

\textit{Cotorsion pairs.} A pair $(\X, \Y) \subseteq \Modu (R)^{2} $ is a \textbf{cotorsion pair} if $\X^{\ortogonal _1} = \Y$ and $\X = {^{\ortogonal _1} \Y}$. This cotorsion pair is  \textbf{complete} if for any $C \in \Modu(R)$, there are exact sequences $0 \to Y \to X \to C \to 0$ and $0 \to C \to Y' \to X' \to 0$ where $X, X' \in \X $ and $Y, Y' \in \Y$. 

Moreover, the cotorsion pair is \textbf{hereditary} if $\X \ortogonal \Y$. A triple $(\X,\Y,\Z) \subseteq \Modu (R) ^{3}$ is called a \textbf{cotorsion triple} \cite{Chen} provided that both $(\X, \Y)$ and $(\Y, \Z)$ are cotorsion pairs; it is \textbf{complete} (resp. \textbf{hereditary}) provided that both of the two cotorsion  pairs are complete (resp. hereditary).

\section{ Gorenstein weak  global dimensions}

We begin consider the class of projective coresolved Gorenstein flat modules generalized, proposed by S. Estrada, A. Iacob and M. A. P\'erez \cite[Definition 2.6]{Estrada20}, which are a generalization of the \v{S}aroch and \v{S}t'ov\'ich\v{e}k's \cite[Section 3]{Stovi}.  Based on the fact that they are a type of relative Gorenstein flat, we will study their relative weak global dimension. We  start with the following result.

\begin{pro}\label{Estable}
Let $\mathcal{B} \subseteq \Modu (R ^{\op})$ be a class of right $R$-modules. For all $M \in \Proj \GF  ^{\gorro} _{\mathcal{B}}$  the following conditions are equivalent:
\begin{itemize}
\item[(i)] $P\Gfd _{\mathcal{B}}(M) \leq n$,
\item[(ii)] If $0 \to K_n \to H_{n-1} \to \cdots \to H_0 \to M \to 0$ is an exact sequence with $ H_i \in \Proj \GF  _{\mathcal{B}}$, then $ K_n  \in \Proj \GF  _{\mathcal{B}}$.
\end{itemize}
\end{pro}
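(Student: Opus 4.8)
The plan is to prove both implications by exploiting the standard "resolution dimension" machinery for the class $\Proj\GF_{\mathcal{B}}$, which behaves well precisely because it should be (projectively) resolving: closed under extensions, kernels of epimorphisms, and containing the projectives. The implication (ii) $\Rightarrow$ (i) is the easy direction: given $M \in \Proj\GF^{\gorro}_{\mathcal{B}}$, take any truncation of a $\Proj\GF_{\mathcal{B}}$-resolution
$$0 \to K_n \to H_{n-1} \to \cdots \to H_0 \to M \to 0$$
with $H_i \in \Proj\GF_{\mathcal{B}}$; such a resolution exists because $M$ has finite $\Proj\GF_{\mathcal{B}}$-resolution dimension. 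Breaking it at the $n$-th spot produces an exact sequence $0 \to K_n \to H_{n-1} \to \cdots \to H_0 \to 0$ after splicing with $M$ appropriately — more precisely, one uses $0 \to K_n \to H_{n-1} \to \cdots \to H_1 \to H_0' \to 0$ where $H_0' = \Ker(H_0 \to M)$-type arguments, or better, one directly invokes hypothesis (ii) on the sequence $0 \to K_n \to H_{n-1} \to \cdots \to H_1 \to \Ima(H_1\to H_0) \to 0$. I would be slightly careful here and instead argue: if (ii) holds then in any exact sequence $0 \to K_n \to H_{n-1} \to \cdots \to H_0 \to M \to 0$ with $H_i$ in the class, the module $K_n$ lies in $\Proj\GF_{\mathcal{B}}$, so $M$ admits a length-$n$ resolution by objects of the class, giving $P\Gfd_{\mathcal{B}}(M) \le n$.

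For (i) $\Rightarrow$ (ii), the key input is that $\Proj\GF_{\mathcal{B}}$ is a resolving class — in particular closed under kernels of epimorphisms between its objects — together with the dimension-shifting / "resolution dimension is well-defined" lemma. Assume $P\Gfd_{\mathcal{B}}(M) \le n$ and let $0 \to K_n \to H_{n-1} \to \cdots \to H_0 \to 0$ be exact with $H_i \in \Proj\GF_{\mathcal{B}}$; note $M \cong \Ima(H_0)$-type identification does not quite apply, so instead read this sequence as a resolution of the module $L := \Coker(H_1 \to H_0) = H_0 / \Ima(H_1 \to H_0)$... — actually the cleanest route is: the sequence exhibits $K_n$ as the $n$-th syzygy in a $\Proj\GF_{\mathcal{B}}$-resolution of $H_0$ (reading $0 \to H_0 \to H_0 \to 0$ is silly), so let me reorganize: set $C_0 = \Coker(H_1\to H_0)$. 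We get $P\Gfd_{\mathcal{B}}(C_0) \le n$ is NOT given. The honest approach: compare two resolutions. Since $P\Gfd_{\mathcal{B}}(M)\le n$, $M$ has some resolution $0 \to K'_n \to H'_{n-1} \to \cdots \to H'_0 \to M \to 0$ with $K'_n \in \Proj\GF_{\mathcal{B}}$; by a Schanuel-type / generalized syzygy lemma (valid because the class is resolving and contains the projectives, hence closed under the relevant direct sums and pullbacks), the $n$-th syzygies of $M$ with respect to $\Proj\GF_{\mathcal{B}}$ all lie in $\Proj\GF_{\mathcal{B}}$ once one of them does; applying this to the given sequence (which is an $n$-term resolution of $M$) yields $K_n \in \Proj\GF_{\mathcal{B}}$.

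The main obstacle I anticipate is justifying the generalized Schanuel lemma in this relative setting: one needs that $\Proj\GF_{\mathcal{B}}$ is closed under extensions and kernels of epimorphisms (resolving), and that $\Proj\GF_{\mathcal{B}}$-resolutions can be compared termwise up to summands with objects of the class. These structural facts — that $\Proj\GF_{\mathcal{B}}$ is a resolving class and that it is "projectively resolving" in the sense that in $0 \to A \to B \to C \to 0$ with $B,C \in \Proj\GF_{\mathcal{B}}$ one gets $A \in \Proj\GF_{\mathcal{B}}$ — presumably follow from the complete/hereditary cotorsion-pair structure associated to these modules in \cite{Estrada20}, and I would cite those. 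Given them, the argument is the usual induction on $n$: for $n=0$ it is the resolving property (kernel of epi between two objects is an object, extended by induction along the sequence), and the inductive step splits $0 \to K_n \to H_{n-1} \to \cdots \to H_0 \to 0$ at $H_{n-1}$, using that $\Ima(H_{n-1}\to H_{n-2})$ has $\Proj\GF_{\mathcal{B}}$-resolution dimension $\le n-1$ to conclude. The bookkeeping to make "$M \in \Proj\GF^{\gorro}_{\mathcal{B}}$" interact correctly with the unindexed sequence in (ii) — i.e., recognizing which module that sequence resolves and relating it to $M$ — is the one genuinely delicate point, and I would handle it by phrasing (ii) as a statement about $n$-th syzygies and invoking the well-definedness of relative resolution dimension from the resolving structure.
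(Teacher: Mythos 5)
Your argument is correct and is essentially the paper's own proof unpacked: the paper disposes of the statement by citing the resolving property of $\Proj \GF_{\mathcal{B}}$ from \cite[Theorem 2.10]{Estrada20}, the Eilenberg swindle for closure under direct summands, and the Auslander--Bridger comparison lemma \cite[Lemma 3.12]{Au-Bri}, which is precisely the generalized-Schanuel machinery you invoke for (i) $\Rightarrow$ (ii), with (ii) $\Rightarrow$ (i) being the trivial truncation argument you give. The ``delicate point'' you agonize over is only a typo in the statement of (ii) --- the sequence is meant to end in $\cdots \to H_0 \to M \to 0$ --- and your reading of it as a statement about $n$-th $\Proj\GF_{\mathcal{B}}$-syzygies of $M$ is exactly the intended one.
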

\begin{proof}
Is an formal consequence of their properties \cite[Theorem 2.10]{Estrada20}, the Eilenberg's Swindle and \cite[Lemma 3.12]{Au-Bri}.
\end{proof}
We have the following immediate result. 
  
  \begin{cor} \label{CoroDimensions}
 Let $\mathcal{B} \subseteq \Modu (R ^{\op})$ be a class of right $R$-modules and consider an exact sequence $0 \to K \to G \to M \to 0$ such that  $G \in \Proj \GF _{\mathcal{B}}$. If $M \in \Proj \GF  _{\mathcal{B}}$, then also $K \in  \Proj \GF  _{\mathcal{B}}$. Otherwise we get $P \Gfd  _{\mathcal{B}} (K ) = P \Gfd  _{\mathcal{B}} (M) -1 \geq 0$.
  \end{cor}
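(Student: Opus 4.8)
The plan is to deduce this directly from Proposition~\ref{Estable} by feeding it suitable short exact sequences, handling the two cases separately. Throughout, note that the hypothesis $G \in \Proj\GF_{\mathcal{B}}$ together with $M \in \Proj\GF^{\gorro}_{\mathcal{B}}$ forces $K \in \Proj\GF^{\gorro}_{\mathcal{B}}$ as well, since $\Proj\GF^{\gorro}_{\mathcal{B}}$ is closed under kernels of epimorphisms between its objects (a standard resolution-dimension fact; if the paper has not isolated it, it follows by splicing a finite $\Proj\GF_{\mathcal{B}}$-resolution of $M$ against the given sequence and invoking Proposition~\ref{Estable}). Thus Proposition~\ref{Estable} is applicable to $K$, $M$ and $G$.

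First, suppose $M \in \Proj\GF_{\mathcal{B}}$. Then the sequence $0 \to K \to G \to M \to 0$ is itself an exact sequence of the type appearing in Proposition~\ref{Estable}(ii) with $n = 1$, $H_1 = G$, $H_0 = M$, both in $\Proj\GF_{\mathcal{B}}$, so that $P\Gfd_{\mathcal{B}}(M) \leq 1$ trivially. But more directly: since $G, M \in \Proj\GF_{\mathcal{B}}$, applying the equivalence (i)$\Leftrightarrow$(ii) of Proposition~\ref{Estable} to $M$ with the exact sequence $0 \to K \to G \to M \to 0$ (note $P\Gfd_{\mathcal{B}}(M) = 0 \leq 1$, giving condition (i) for $n=1$) yields $K \in \Proj\GF_{\mathcal{B}}$, which is the first assertion.

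Now suppose $M \notin \Proj\GF_{\mathcal{B}}$, so that $d := P\Gfd_{\mathcal{B}}(M) \geq 1$ (it is finite since $M \in \Proj\GF^{\gorro}_{\mathcal{B}}$). I claim $P\Gfd_{\mathcal{B}}(K) = d - 1$. For the inequality $P\Gfd_{\mathcal{B}}(K) \leq d-1$: take a $\Proj\GF_{\mathcal{B}}$-resolution $0 \to L_{d-1} \to H_{d-2} \to \cdots \to H_0 \to M \to 0$ realizing $P\Gfd_{\mathcal{B}}(M) \leq d$, so by Proposition~\ref{Estable} its $(d-1)$st syzygy $L_{d-1}$ lies in $\Proj\GF_{\mathcal{B}}$; splicing the given sequence $0 \to K \to G \to M \to 0$ with this resolution and truncating produces an exact sequence exhibiting $K$ with a $\Proj\GF_{\mathcal{B}}$-resolution of length $d-1$ (using that $G \in \Proj\GF_{\mathcal{B}}$), whence $P\Gfd_{\mathcal{B}}(K) \leq d-1$ by Proposition~\ref{Estable}. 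For the reverse inequality $P\Gfd_{\mathcal{B}}(K) \geq d-1$: if it were $\leq d-2$, then taking a $\Proj\GF_{\mathcal{B}}$-resolution of $K$ of length $d-2$ and splicing in $0 \to K \to G \to M \to 0$ would give $M$ a $\Proj\GF_{\mathcal{B}}$-resolution of length $d-1$, forcing $P\Gfd_{\mathcal{B}}(M) \leq d-1 < d$, a contradiction. Hence $P\Gfd_{\mathcal{B}}(K) = d-1 \geq 0$.

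The only real subtlety is the splicing-and-truncation bookkeeping used in the second case to convert the statement of Proposition~\ref{Estable} (phrased for syzygies of a fixed resolution) into the ``shifting'' behavior of $P\Gfd_{\mathcal{B}}$ along a short exact sequence; this is routine once one observes that Proposition~\ref{Estable}(ii) is exactly a syzygy-invariance statement, so that the standard homological dimension-shifting argument goes through verbatim with $\Proj\GF_{\mathcal{B}}$ in place of the projectives. I expect no genuine obstacle, only care in checking that all intermediate modules remain in $\Proj\GF^{\gorro}_{\mathcal{B}}$ so that Proposition~\ref{Estable} applies at each stage.
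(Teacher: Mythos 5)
Your proof is correct and coincides with the paper's: the paper gives no argument at all, presenting the corollary as an immediate consequence of Proposition~\ref{Estable}, and the dimension-shifting you carry out (syzygy-invariance in both directions) is precisely what makes it immediate. Two cosmetic slips are worth fixing, though neither affects the substance: the resolution $0 \to L_{d-1} \to H_{d-2} \to \cdots \to H_0 \to M \to 0$ has length $d-1$ rather than $d$, and for the bound $P\Gfd_{\mathcal{B}}(K) \leq d-1$ one should splice a partial $\Proj\GF_{\mathcal{B}}$-resolution of $K$ (not one of $M$) onto $0 \to K \to G \to M \to 0$, so that Proposition~\ref{Estable} applied to the resulting length-$d$ partial resolution of $M$ puts the relevant syzygy in $\Proj\GF_{\mathcal{B}}$.
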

  
For the following we need to keep in mind that  the class $\Proj \GF _{\mathcal{B}}$ is always  a  resolving class \cite[Theorem 2.10]{Estrada20}.
  
  \begin{pro} \label{Holm2.19}
Let $\mathcal{B} \subseteq \Modu (R ^{\op})$ be a class of right $R$-modules and $\{ M_{\lambda}\} _{\lambda \in \Lambda}$ a family of left $R$-modules, then the following equality is true
  $$P \Gfd  _{\mathcal{B}} (\bigoplus _{\Lambda} M_{\lambda}) = \sup \{ P \Gfd  _{\mathcal{B}} (M_{\lambda}) |  \lambda \in \Lambda \}.$$
  \end{pro}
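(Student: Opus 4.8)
The plan is to follow the classical pattern of Holm's direct-sum formula for Gorenstein projective dimension, deducing the equality from the syzygy characterization of Proposition \ref{Estable} together with two closure properties of $\Proj\GF_{\mathcal{B}}$: closure under arbitrary direct sums (immediate from the definition, since $B\otimes_{R}-$ commutes with direct sums, a direct sum of projectives is projective, and direct sums are exact) and closure under direct summands (part of \cite[Theorem 2.10]{Estrada20}). I would prove the inequalities $\sup_{\lambda}P\Gfd_{\mathcal{B}}(M_{\lambda})\leq P\Gfd_{\mathcal{B}}(\bigoplus_{\Lambda}M_{\lambda})$ and $P\Gfd_{\mathcal{B}}(\bigoplus_{\Lambda}M_{\lambda})\leq \sup_{\lambda}P\Gfd_{\mathcal{B}}(M_{\lambda})$ separately, and reduce at once to the case $n:=\sup_{\lambda}P\Gfd_{\mathcal{B}}(M_{\lambda})<\infty$: indeed the direct-summand monotonicity established below shows that if some $M_{\mu}$ has infinite relative dimension, then so does $\bigoplus_{\Lambda}M_{\lambda}$, so the equality holds in $\mathbb{N}\cup\{\infty\}$ anyway.

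For the bound $P\Gfd_{\mathcal{B}}(\bigoplus_{\Lambda}M_{\lambda})\leq n$: for each $\lambda$ fix a projective resolution of $M_{\lambda}$ and let $K_{\lambda}$ be its $n$-th syzygy. Since $M_{\lambda}\in\Proj\GF^{\gorro}_{\mathcal{B}}$ and $P\Gfd_{\mathcal{B}}(M_{\lambda})\leq n$, Proposition \ref{Estable} gives $K_{\lambda}\in\Proj\GF_{\mathcal{B}}$. Taking the direct sum over $\Lambda$ of these length-$n$ truncated resolutions yields an exact sequence $0\to\bigoplus_{\Lambda}K_{\lambda}\to Q_{n-1}\to\cdots\to Q_{0}\to\bigoplus_{\Lambda}M_{\lambda}\to 0$ with every $Q_{j}$ projective, and $\bigoplus_{\Lambda}K_{\lambda}\in\Proj\GF_{\mathcal{B}}$ by closure under direct sums. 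This sequence witnesses $\resdim_{\Proj\GF_{\mathcal{B}}}(\bigoplus_{\Lambda}M_{\lambda})\leq n$, as desired (the case $n=0$ being just the direct-sum closure).

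For the reverse inequality it suffices to show that $P\Gfd_{\mathcal{B}}$ does not increase when passing to a direct summand, and then apply this to $M_{\mu}$ sitting inside $\bigoplus_{\Lambda}M_{\lambda}$. So let $M=A\oplus B$ with $m:=P\Gfd_{\mathcal{B}}(M)<\infty$; forming the direct sum of projective resolutions of $A$ and of $B$, the $m$-th syzygy of $M$ splits as $K_{A}\oplus K_{B}$, and Proposition \ref{Estable} applied to $M$ gives $K_{A}\oplus K_{B}\in\Proj\GF_{\mathcal{B}}$. By closure under direct summands we get $K_{A}\in\Proj\GF_{\mathcal{B}}$, whence $P\Gfd_{\mathcal{B}}(A)\leq m$. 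Combining the two inequalities gives the asserted equality.

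I do not expect a genuine obstacle here: once Proposition \ref{Estable} is available the argument is entirely formal, in the spirit of the authors' own proof of that proposition. The single ingredient that goes beyond the ``resolving'' property of $\Proj\GF_{\mathcal{B}}$ is its closure under direct summands, quoted from \cite[Theorem 2.10]{Estrada20}; this is precisely what the inequality $P\Gfd_{\mathcal{B}}(M_{\mu})\leq P\Gfd_{\mathcal{B}}(\bigoplus_{\Lambda}M_{\lambda})$ rests upon, and the routine bookkeeping with the value $\infty$ is subsumed by the same monotonicity statement.
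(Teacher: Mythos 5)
Your proof is correct, and its overall architecture matches the paper's: the inequality $P\Gfd_{\mathcal{B}}(\bigoplus_\Lambda M_\lambda)\leq\sup_\lambda P\Gfd_{\mathcal{B}}(M_\lambda)$ comes from closure of $\Proj\GF_{\mathcal{B}}$ under coproducts, and the reverse inequality is reduced to monotonicity of $P\Gfd_{\mathcal{B}}$ under passage to direct summands. Where you diverge is in how that monotonicity is established. The paper argues by induction on $n=P\Gfd_{\mathcal{B}}(M)$: the base case is summand-closure of $\Proj\GF_{\mathcal{B}}$ itself (obtained via the Eilenberg swindle from closure under coproducts and kernels of epimorphisms), and the inductive step builds a $3\times 3$ diagram with split-exact rows from approximations $0\to K_0\to G_0\to N\to 0$ and $0\to K_1\to G_1\to L\to 0$, applies Corollary \ref{CoroDimensions} to the middle column to get $P\Gfd_{\mathcal{B}}(K_0\oplus K_1)=n-1$, and invokes the induction hypothesis. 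You instead dispense with the induction entirely: summing projective resolutions of the two summands, you observe that the $m$-th syzygy of $M=A\oplus B$ splits as $K_A\oplus K_B$, apply Proposition \ref{Estable} (the Auslander--Bridger syzygy characterization) once to conclude $K_A\oplus K_B\in\Proj\GF_{\mathcal{B}}$, and then use summand-closure of the class to extract $K_A$. This is shorter and arguably cleaner, at the price of leaning on Proposition \ref{Estable} for arbitrary $n$ rather than only through Corollary \ref{CoroDimensions}; both routes ultimately rest on the same two facts (the class is resolving and closed under coproducts, hence under summands). One small bookkeeping remark: you attribute summand-closure of $\Proj\GF_{\mathcal{B}}$ directly to \cite[Theorem 2.10]{Estrada20}, whereas the paper derives it from that theorem via the swindle; either way it is available, so this does not affect the validity of your argument.
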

  
  \begin{proof}
  The class $\Proj \GF _{\mathcal{B}}$  is closed by coproducts, thus we get the inequality $P \Gfd  _{\mathcal{B}} (\bigoplus _{\Lambda} M_{\lambda}) \leq \sup \{ P \Gfd  _{\mathcal{B}} (M_{\lambda}) |  \lambda \in \Lambda \}$. For the converse inequality is enough to show that if $N$ is a direct summand of an $R$-module $M$, then $P \Gfd  _{\mathcal{B}} (N) \leq P \Gfd  _{\mathcal{B}} (M)$. To do this we can suppose that $P \Gfd  _{\mathcal{B}} (M) =n < \infty$ and thus proceed by induction over $n$.  When $n = 0$, from \cite[Theorem 2.10]{Estrada20} we know that  $\Proj \GF _{\mathcal{B}}$ is closed by kernels of epimorphisms and clearly is closed by coproducts. Thus, by Eilenberg's Swindle is closed by summands, with this we get that if $M$ lies in   $\Proj \GF _{\mathcal{B}}$ so is $N$. If $n >0$, we have $M = N \oplus L$ and we can pick exact sequences $0 \to K_0 \to G_0 \to N \to 0$ and $0  \to K _1 \to G_1\to L \to 0$ where $G_0 , G_1 \in \Proj \GF _{\mathcal{B}}$. We can construct the following commutative diagram with split-exact rows 
  $$\xymatrix{ 
  K_{0_{}}  \ar@{^{(}->}[d]  \ar@{^{(}->}[r] & K_0 \oplus K_1 \ar@{^{(}->}[d]  \ar@{>>}[r] & K_{1_{}}   \ar@{^{(}->}[d]  \\
  G_0  \ar@{>>}[d] \ar@{^{(}->}[r] & G_0 \oplus G_1 \ar@{>>}[r]\ar@{>>}[d] & G_1  \ar@{>>}[d] \\
 N \ar@{^{(}->} [r]& M \ar@{>>}[r] & L
}$$

Applying Corollary \ref{CoroDimensions} to the middle column in this diagram we obtain that $P \Gfd  _{\mathcal{B}} (K_0 \oplus K_1) = n-1$. Hence the induction hypothesis give us $P \Gfd  _{\mathcal{B}} (K_0) \leq n-1$, and thus the short exact sequence $0 \to K_0 \to G_0 \to N \to 0$ shows that $P \Gfd  _{\mathcal{B}} (N) \leq n$, as desired.
  \end{proof}
  
  We are now in a position to prove the first main result in this paper.   For this, given a class of right $R$-modules $\mathcal{B} \subseteq \Modu (R^{\op})$ the  \textit{left  Gorenstein weak  global dimension}  relative to the class $\Proj\GF _{ \mathcal{B}  }$ of projectively coresolved Gorenstein $ \mathcal{B} $-flat $R$-modules is
   $$\mathrm{l.w.PGFgl} _{\mathcal{B} }(R):= \sup \{P\Gfd _{ \mathcal{B} } (M)| M \in \Modu (R)  \}.$$
   Also the notation $\Proj \GF ^{2} _{\mathcal{B}} $ is used to refer to the class of left $R$-modules $M$ for which there exists an exact sequence of left $R$-modules in $\Proj \GF _{\mathcal{B}} $
   $$\cdots \to G_1 \to G_0 \to G^0 \to G^1 \to \cdots $$
   such that $M \cong \Ker (G^0 \to G^1) $ and such that remains acyclic  when the functor $B  \otimes_R - $  is applied to it for all $B \in  \mathcal{B}$.

\begin{pro}\label{WeakCoresolved}
Consider  a class $\mathcal{B} \subseteq \Modu (R^{\op})$ of right $R$-modules and let  $n$ be a non-negative integer. If $\fd (\mathcal{B}) < \infty$ and  $P\Gfd _{\mathcal{B}} (\Inj (R)) \leq n$ then $\mathrm{l.w.PGFgl} _{\mathcal{B} }(R) \leq n$.
\end{pro}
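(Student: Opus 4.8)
The plan is to take an arbitrary left $R$-module $M$ and build, via a suitable (co)resolution, a module whose $P\Gfd_{\mathcal{B}}$ we can control by the two hypotheses, then pull the bound back to $M$. First I would observe that since $\fd(\mathcal{B}) < \infty$, every flat left $R$-module — and indeed every module of finite flat dimension, in particular every projective — lies in $P\GF_{\mathcal{B}}$; more precisely, by the stability/resolving properties of $P\GF_{\mathcal{B}}$ recorded in \cite[Theorem 2.10]{Estrada20}, bounded complexes built from projectives that stay exact after $B\otimes_R-$ witness that $\Proj(R) \subseteq P\GF_{\mathcal{B}}$, and hence $\mathcal{F}(R)^{\wedge}_{\mathrm{fd}(\mathcal{B})}$-type arguments place the relevant small modules inside the class. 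The point of $\fd(\mathcal{B})<\infty$ is exactly that $B\otimes_R-$ has bounded homological amplitude, so acyclicity after tensoring is automatic in the ranges that matter.

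Next, take an injective coresolution $0 \to M \to I^0 \to I^1 \to \cdots$. Because $R$ has finite global injective dimension is \emph{not} assumed, I instead truncate cleverly: I would use that $P\Gfd_{\mathcal{B}}(\Inj(R)) \le n$ together with Corollary \ref{CoroDimensions} and Proposition \ref{Holm2.19} (additivity over coproducts, hence control of summands and of extensions) to run a dimension-shifting / "swindle"-style argument. Concretely, embed $M$ into an injective $I$, giving $0 \to M \to I \to C \to 0$; if I can show $P\Gfd_{\mathcal{B}}(C) \le n$ as well, then since $P\Gfd_{\mathcal{B}}(I)\le n$ the short exact sequence forces $P\Gfd_{\mathcal{B}}(M) \le n+1$ in general — but to get the sharp bound $\le n$ I must instead resolve in the projective direction. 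So the cleaner route: take $0 \to K \to P \to M \to 0$ with $P$ projective (hence $P \in P\GF_{\mathcal{B}}$ by the first step). By Corollary \ref{CoroDimensions}, either $M \in P\GF_{\mathcal{B}}$ (done) or $P\Gfd_{\mathcal{B}}(K) = P\Gfd_{\mathcal{B}}(M) - 1$. Iterating $n$ times produces an exact sequence $0 \to K_n \to P_{n-1} \to \cdots \to P_0 \to M \to 0$ with all $P_i \in P\GF_{\mathcal{B}}$, and it suffices to prove $K_n \in P\GF_{\mathcal{B}}$.

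To establish $K_n \in P\GF_{\mathcal{B}}$, I would combine the syzygy $K_n$ with the injective coresolution of $M$: splicing the projective resolution of $M$ with $M \hookrightarrow I^0 \to I^1 \to \cdots$ and using that each $I^j \in P\GF_{\mathcal{B}}^{\wedge}$ with $P\Gfd_{\mathcal{B}}(I^j) \le n$, one gets that $K_n$ sits in a long exact sequence with all other terms of $P\Gfd_{\mathcal{B}}$ at most $n$, and by repeated application of Corollary \ref{CoroDimensions} (shifting along the exact sequence) one drives the dimension of $K_n$ down to $0$. The characterization in Proposition \ref{Estable} is the right tool here: once $M \in P\GF_{\mathcal{B}}^{\wedge}$ is known (which follows as soon as we exhibit any finite $P\GF_{\mathcal{B}}$-resolution, e.g. from the $n$-step resolution above together with $K_n$ having finite dimension, guaranteed because $\fd(\mathcal B)<\infty$ bounds everything), condition (ii) of Proposition \ref{Estable} tells us that the $n$-th syzygy in \emph{any} $P\GF_{\mathcal{B}}$-resolution is already in $P\GF_{\mathcal{B}}$.

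The main obstacle I anticipate is showing that an \emph{arbitrary} $M \in \Modu(R)$ actually lies in $P\GF_{\mathcal{B}}^{\wedge}$ (so that Proposition \ref{Estable} applies) — this is where $\fd(\mathcal{B}) < \infty$ must do real work, presumably by showing that $P\Gfd_{\mathcal{B}}(M)$ is bounded by something like $\fd(\mathcal{B}) + \mathrm{(injective\ dimension\ data)}$, using that tensoring by $B$ kills homology past degree $\fd(\mathcal{B})$, so a high enough syzygy of $M$ in a flat (indeed projective) resolution automatically satisfies the tensor-exactness condition defining $P\GF_{\mathcal{B}}$. Pinning down that finiteness cleanly, and checking the tensor-acyclicity of the relevant total/spliced complex, is the delicate step; the rest is dimension shifting via Corollaries \ref{CoroDimensions} and Propositions \ref{Estable}, \ref{Holm2.19}.
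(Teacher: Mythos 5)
There is a genuine gap, and you have in fact located it yourself: everything in your argument that would force $K_n\in P\GF_{\mathcal{B}}$ runs through Proposition \ref{Estable} or Corollary \ref{CoroDimensions}, and both of these presuppose that the module in question already lies in $P\GF_{\mathcal{B}}^{\gorro}$ (equivalently, has \emph{finite} $P\Gfd_{\mathcal{B}}$). That finiteness for an arbitrary $M$ is precisely the content of the proposition, so the dimension-shifting machinery alone is circular. Moreover, the mechanism you suggest for closing the gap does not work as stated: the hypothesis $\fd(\mathcal{B})<\infty$ does make a high projective syzygy $K_n$ satisfy $\Tor_i^R(B,K_n)=0$ for $i>0$, but that only supplies the \emph{left} half of the totally acyclic complex of projectives required by the definition of $P\GF_{\mathcal{B}}$; the hard part is producing a right $(\mathcal{B}\otimes_R-)$-acyclic \emph{projective} coresolution of $K_n$, and splicing the projective resolution of $M$ with its injective coresolution cannot do this, since the $I^j$ are not projective. (A side error of the same flavor: flat modules do not in general lie in $P\GF_{\mathcal{B}}$; by \cite[Theorem 2.9]{Estrada20} they lie in $P\GF_{\mathcal{B}}^{\ortogonal}$.)

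The missing idea in the paper's proof is a periodicity construction. One forms $M':=M\oplus C\oplus K$, where $C$ is the coproduct of all projective syzygies of $M$ and $K$ is the product of all injective cosyzygies, and obtains a \emph{self-extension} $0\to M'\to P\oplus I\to M'\to 0$ with $P$ projective and $I$ injective. Applying the Horseshoe Lemma to a projective resolution of $M'$ along this sequence yields, at the $n$-th syzygy stage, a short exact sequence $0\to L_n\to F\to L_n\to 0$ in which $F$ is an extension of a module of $P\Gfd_{\mathcal{B}}\le n$ (namely $P\oplus I$, controlled by the hypothesis on $\Inj(R)$ via Proposition \ref{Holm2.19}) by projectives, hence $F\in P\GF_{\mathcal{B}}$ by Proposition \ref{Estable}; the hypothesis $\fd(\mathcal{B})\le n$ makes this sequence $(\mathcal{B}\otimes_R-)$-acyclic, and splicing it with itself infinitely in both directions produces the required totally acyclic complex, giving $L_n\in P\GF_{\mathcal{B}}$ (using $P\GF^2_{\mathcal{B}}=P\GF_{\mathcal{B}}$ from closure under extensions). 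It is exactly this two-sided splicing of a periodic sequence that manufactures the right half of the resolution, and it is the step absent from your proposal.
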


\begin{proof}
Take $M \in \Modu (R)$, and consider a projective and a injective resolution to $M$, 
$$ \cdots \to P_1 \to P_0 \to M \to 0, \;\;\mbox{ and } \;\;  0 \to M \to I_0 \to I_1 \to \cdots ,$$
for $P_{-1} = M = I_{-1}$ and  $i \geq 0 $ we can declare $C_i := \Ima (P_i \to P_{i-1} )$ and $K_i := \Ker (I_i \to I_{i+1})$ to obtain a decomposition of such resolutions in short exact sequences for each $i \geq 0$ as follows 
$$ 0 \to C_{i+1} \to P_i \to C_i \to 0  \;\;\mbox{ and } \;\; 0 \to K_i \to I_i \to K_{i+1} \to 0.$$
By doing the corpoduct of the first family and the product of the second family we get the exact sequences 
\[  0 \to \bigoplus _{i \in \mathbb{N}} C_{i+1} \to \bigoplus _{i \in \mathbb{N}} P_i \to  M \oplus (\bigoplus _{i \in \mathbb{N}} C_{i+1}) \to 0, \]

\[0 \to M \oplus (\prod _{i \in \mathbb{N}} K_{i+1} )\to \prod _{i \in \mathbb{N}} I_i \to   \prod _{i \in \mathbb{N}} K_{i+1} \to 0,
\]
and adding both of them we get the exact sequence 
$$0 \to M \oplus (C \oplus K) \to P \oplus I \to M \oplus (C \oplus K) \to 0,$$
with $C := \bigoplus _{i \in \mathbb{N}} C_{i+1}$, $K:= \prod _{i \in \mathbb{N}} K_{i+1} $, $P := \bigoplus _{i \in \mathbb{N}}  P_i$ and $I:= \prod _{i \in \mathbb{N}} I_i$. Take an exact sequence  $\cdots \to F_1 \to F_0 \to M\oplus (C \oplus K)$ by projectives $F_i$. Using Horseshoe's Lemma we obtain the following  commutative and exact diagram
$$\xymatrix{ L_{n_{}\;} \ar@{^{(}->}[d]  \ar@{^{(}->}[r]& F_{} \ar@{^{(}->}[d]  \ar@{>>}[r]& L_{n_{}\;} \ar@{^{(}->}[d]  \\
 F_{n-1} \ar[d]  \ar@{^{(}->}[r]& F_{n-1} \oplus F_{n-1} \ar[d]  \ar@{>>}[r]& F_{n-1} \ar[d]  \\
  \vdots \ar[d]  & \vdots \ar[d] & \vdots  \ar[d]  \\
  F_0 \ar@{>>}[d] \ar@{^{(}->}[r] & F_0 \oplus F_0 \ar@{>>}[r]\ar@{>>}[d] & F_0  \ar@{>>}[d] \\
 M\oplus (C \oplus K) \ar@{^{(}->} [r]& P \oplus I \ar@{>>}[r] & M\oplus (C \oplus K), 
}$$
note that $P\Gfd _{\mathcal{B}} (P \oplus I) = P\Gfd _{\mathcal{B}} (I)$ by  Proposition  \ref{Holm2.19}, since $P \in \Proj\GF_{\mathcal{B}}$. Now using the hypothesis we have $P\Gfd _{\mathcal{B}} (I) \leq n$. Therefore by  Lemma \ref{Estable}  $F \in \Proj \GF_{ \mathcal{B}} $ since $F_i \in \Proj (R) \subseteq \Proj\GF _{ \mathcal{B}} $. 

Now take $B \in \mathcal{B}$, by hypotheses $\fd (B) \leq n$, then using the exact sequence by projectives for  $M \oplus (C \oplus K)$ we have  $\Tor _1 ^R (B , L_n) \cong \Tor _{n+1} ^R (B , M \oplus (C \oplus K)) =0$. In consequence the exact sequence $0 \to L_n \to F \to L_n \to 0$ is $(\mathcal{B} \otimes _R -)$-acyclic. Pasting this sequence repeatedly we obtain an exact sequence $$ \cdots \to F \to F \to F \to \cdots $$ which is $(\mathcal{B} \otimes _R -)$-acyclic. This implies that  $L_n \in \Proj \GF^2 _{\mathcal{B}}$. 
From  \cite[Theorem 2.7]{ZWang} we know that $\Proj \GF ^2 _{\mathcal{B}} = \Proj \GF _{\mathcal{B}}$ since $\Proj \GF _{\mathcal{B}}$ is closed by extensions, therefore $L_n \in  \Proj \GF _{\mathcal{B}} $ and from this we conclude by Proposition \ref{Holm2.19} that $P\Gfd _{\mathcal{B}} (M ) \leq P\Gfd _{\mathcal{B}} (M \oplus (C \oplus K)) \leq n$ as desired.
\end{proof}

Now we give a converse of the previous result. 

\begin{pro}
 Let $\mathcal{B} \subseteq \Modu (R^{\op})$ be a class of right $R$-modules and $M \in \Modu (R)$. Consider the following statements:
 \begin{itemize}
 \item[(i)] $P\Gfd _{\mathcal{B}} (M) \leq n $,
\item[(ii)]  $P\Gfd  _{\mathcal{B}} (M) < \infty$ and $\Tor _{i} ^{R} (B, M) =0$ for all $i > n$ and all $B \in \mathcal{B} $,
\item[(iii)] $P\Gfd  _{\mathcal{B}} (M) < \infty$ and $\Tor _{i} ^{R} (E, M) =0$ for all $i > n$ and all $E \in \mathcal{B} ^{\cogorro} $.
 \end{itemize}  
 Then $\mathrm{(i)} \Rightarrow \mathrm{(ii)}  \Rightarrow  \mathrm{(iii)}$. In consequence $\mathrm{l.w.PGFgl} _{\mathcal{B} }(R) < \infty$ if and only if, both $\fd (\mathcal{B}) $ and  $P\Gfd  _{\mathcal{B}} (\Inj (R))$ are finite.
 
\end{pro}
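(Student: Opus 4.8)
The plan is to prove $\mathrm{(i)} \Rightarrow \mathrm{(ii)} \Rightarrow \mathrm{(iii)}$ by homological dimension shifting and then read off the two consequences; in each implication the finiteness of $P\Gfd_{\mathcal{B}}(M)$ is inherited for free, so the real content is the $\Tor$-vanishing. The one input I would isolate first is that every $G \in P\GF_{\mathcal{B}}$ satisfies $\Tor^R_j(B,G)=0$ for all $j\geq 1$ and all $B\in\mathcal{B}$: if $G=\Ker(P^0\to P^1)$ sits inside an exact complex $\cdots\to P_1\to P_0\to P^0\to P^1\to\cdots$ of projectives that stays acyclic after applying $B\otimes_R-$, then $\cdots\to P_1\to P_0\to G\to 0$ is a projective resolution of $G$, and the acyclicity of $B\otimes_R(\cdots\to P_1\to P_0\to P^0\to\cdots)$ at the interior spots $B\otimes_R P_j$ $(j\geq 1)$ computes these $\Tor$'s as zero. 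Granting this, for $\mathrm{(i)}\Rightarrow\mathrm{(ii)}$ I would take a resolution $0\to G_n\to G_{n-1}\to\cdots\to G_0\to M\to 0$ with each $G_j\in P\GF_{\mathcal{B}}$, split it into short exact sequences $0\to K_{j+1}\to G_j\to K_j\to 0$ with $K_0=M$ and $K_n=G_n$, and feed each into the long exact $\Tor$-sequence against a fixed $B\in\mathcal{B}$: for $i\geq 2$ both neighbouring terms $\Tor^R_i(B,G_j)$ and $\Tor^R_{i-1}(B,G_j)$ vanish, so $\Tor^R_i(B,K_j)\cong\Tor^R_{i-1}(B,K_{j+1})$; iterating $n$ times gives $\Tor^R_i(B,M)\cong\Tor^R_{i-n}(B,G_n)=0$ for every $i>n$.

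For $\mathrm{(ii)}\Rightarrow\mathrm{(iii)}$ I would argue by induction on the $\mathcal{B}$-coresolution length of $E\in\mathcal{B}^{\cogorro}$. The length-$0$ case is exactly the hypothesis of $\mathrm{(ii)}$. For the inductive step, write $0\to E\to B_0\to C_1\to 0$ with $B_0\in\mathcal{B}$ and $C_1\in\mathcal{B}^{\cogorro}$ of coresolution length one less; the piece $\Tor^R_{i+1}(C_1,M)\to\Tor^R_i(E,M)\to\Tor^R_i(B_0,M)$ of the long exact sequence surrounds $\Tor^R_i(E,M)$ by a term that vanishes by the induction hypothesis (as $i+1>n$) and one that vanishes by the hypothesis of $\mathrm{(ii)}$ (as $i>n$), so $\Tor^R_i(E,M)=0$ for all $i>n$.

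For the consequences: if $\mathrm{l.w.PGFgl}_{\mathcal{B}}(R)=N<\infty$ then $P\Gfd_{\mathcal{B}}(M)\leq N$ for \emph{every} left $R$-module $M$, so $\mathrm{(i)}\Rightarrow\mathrm{(ii)}$ gives $\Tor^R_i(B,M)=0$ for all $i>N$, all $B\in\mathcal{B}$, and all $M$; letting $M$ vary this says precisely $\fd(B)\leq N$, hence $\fd(\mathcal{B})\leq N<\infty$, while $P\Gfd_{\mathcal{B}}(\Inj(R))\leq N<\infty$ is immediate. Conversely, if $\fd(\mathcal{B})$ and $P\Gfd_{\mathcal{B}}(\Inj(R))$ are both finite, Proposition~\ref{WeakCoresolved} applied with $n:=P\Gfd_{\mathcal{B}}(\Inj(R))$ forces $\mathrm{l.w.PGFgl}_{\mathcal{B}}(R)\leq n<\infty$. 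None of this is deep; the only place to take care is the index arithmetic in the dimension-shift of $\mathrm{(i)}\Rightarrow\mathrm{(ii)}$ — keeping the running $\Tor$-degree in the range where the outer terms of each long exact sequence vanish — together with the observation that the left half of a complex defining a $P\GF_{\mathcal{B}}$-module really is a projective resolution on which $B\otimes_R-$ computes only vanishing higher $\Tor$.
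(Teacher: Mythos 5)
Your proposal is correct and follows essentially the same route as the paper: the key input is the vanishing of $\Tor^R_{i}(B,G)$ for $G\in P\GF_{\mathcal{B}}$ (which the paper cites from Estrada--Iacob--P\'erez as its base case and you instead derive directly from the defining totally acyclic complex), followed by dimension shifting along short exact sequences for $\mathrm{(i)}\Rightarrow\mathrm{(ii)}$, a coresolution-length induction for $\mathrm{(ii)}\Rightarrow\mathrm{(iii)}$, and Proposition~\ref{WeakCoresolved} for the converse direction of the final equivalence. The only cosmetic difference is that you unwind the paper's induction on $n$ into an explicit chain of $\Tor$-isomorphisms along a length-$n$ resolution, with the index bookkeeping handled correctly.
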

\begin{proof}
(i) $\Rightarrow$ (ii). By induction over $n$. The case $n =0$ is true by \cite[Lemma 2.8]{Estrada20}. Now we can assume that $n >0$. Thus there is an exact sequence $0 \to K \to G \to M \to 0$, with $G \in \Proj\GF _{ \mathcal{B}} $ and $P\Gfd  _{\mathcal{B}} (K) =  n-1$. We know that for all $B \in \mathcal{B}$ it is satisfied that $\Tor _i ^R (B, G) =0$  for all $i > 0$ and that $\Tor (B, K) =0$ for all $i > n-1$ (by induction). Then we use the long exact sequence $\Tor _{i+1} ^R (B,G) \to \Tor _{i+1} ^R (B, M) \to \Tor _i ^R (B, K)$ to conclude that $\Tor _{i+1} ^R (B, M ) =0$ for all $i > n-1$.

(ii) $\Rightarrow$ (iii). It follows by a shifting argument. 

For the last statement suppose  that $\mathrm{l.w.PGFgl} _{\mathcal{B} }(R)  =n < \infty$, then by (i) $\Rightarrow$ (ii) we have that $\fd (\mathcal{B}) \leq n < \infty$ and also for the subclass $\Inj (R) \subseteq \Modu (R)$ we have $P\Gfd  _{\mathcal{B}} (\Inj (R)) \leq \mathrm{l.w.PGFgl} _{\mathcal{B} }(R) < \infty$. Finally the other implication is given by Proposition \ref{WeakCoresolved}.
\end{proof}

Now we turn our attention to the class   $\GF _{ \mathcal{B}  }$ of  Gorenstein $ \mathcal{B} $-flat $R$-modules. We need before the following tool, where  $(\mathcal{B} \otimes _R - )$-acyclic means that $(B\otimes _R -)$-acyclic for all $B \in \mathcal{B}$
\begin{lem}\label{tensor-exacto}
Consider $\mathcal{B} \subseteq \Modu (R ^{\op})$ a class of right $R$-modules such that  $\fd (\mathcal{B}) < \infty$. Then, all exact complex of  flat $R$-modules  is $(\mathcal{B} \otimes _R - )$-acyclic.
\end{lem}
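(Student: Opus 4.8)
The plan is to show that an arbitrary exact complex of flats stays exact after tensoring with any $B \in \mathcal{B}$, using a dimension-shifting / syzygy argument on the finite flat dimension of $B$. First I would recall that the statement is clear when $B$ is itself flat: tensoring an exact complex of flat modules by a flat module preserves exactness, because the flat complex has all its cycles flat (each syzygy of an acyclic complex of flats is flat), so $B \otimes_R -$ applied to the short exact sequences $0 \to Z_{i} \to F_i \to Z_{i-1} \to 0$ remains exact as $\Tor_1^R(B, Z_{i-1}) = 0$.

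Next I would handle the general case by induction on $d := \fd(B) < \infty$. Pick a short exact sequence $0 \to K \to F \to B \to 0$ with $F$ flat (a right $R$-module), so that $\fd(K) = d - 1$. Let $\cdots \to F_1 \to F_0 \to F^0 \to F^1 \to \cdots$ be an exact complex of flat left $R$-modules, decomposed into short exact sequences $0 \to Z_{n+1} \to F_n \to Z_n \to 0$ at each spot. For the complex to remain acyclic after applying $B \otimes_R -$, it suffices (by the usual long exact sequence of $\Tor$ threaded through the decomposition) to check that $\Tor_1^R(B, Z_n) = 0$ for every cycle module $Z_n$. But from $0 \to K \to F \to B \to 0$ we get the long exact sequence
$$\Tor_2^R(B, Z_n) \cong \Tor_1^R(K, Z_n),$$
and more directly $\Tor_1^R(B, Z_n)$ sits in $\Tor_1^R(F, Z_n) \to \Tor_1^R(B, Z_n) \to \Tor_0$-level maps; since $F$ is flat, $\Tor_1^R(F, Z_n) = 0$, so $\Tor_1^R(B, Z_n) \hookrightarrow$ the connecting map lands in $K \otimes_R Z_n \to F \otimes_R Z_n$, i.e. $\Tor_1^R(B, Z_n) \cong \Ker(K \otimes_R Z_n \to F \otimes_R Z_n)$. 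The cleaner route is to observe $\Tor_{i+1}^R(B, Z_n) \cong \Tor_i^R(K, Z_n)$ for all $i \geq 1$, and iterate the reduction $B \rightsquigarrow K$: after $d$ steps we reach a flat right module, whose relevant $\Tor_1$ vanishes against the flat cycles $Z_n$ by the flat case above.

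So concretely: I would prove by induction on $d = \fd(B)$ that $\Tor_i^R(B, Z) = 0$ for all $i \geq 1$ whenever $Z$ is a flat left $R$-module (here the $Z_n$ are flat because they are the syzygies of an exact complex of flats). The base case $d = 0$ is immediate. For the inductive step use $0 \to K \to F \to B \to 0$ with $\fd(K) = d-1$; the long exact sequence in $\Tor(-, Z)$ gives $\Tor_i^R(B, Z) \cong \Tor_{i-1}^R(K, Z)$ for $i \geq 2$, which vanishes by induction, and $\Tor_1^R(B, Z)$ fits in $0 = \Tor_1^R(F,Z) \to \Tor_1^R(B,Z) \to K \otimes_R Z \to F \otimes_R Z$; since $\Tor_1^R(K,Z) = 0$ by induction (as $d - 1 \geq 0$, or directly $Z$ flat forces this when $d-1=0$), the map $K \otimes_R Z \to F \otimes_R Z$ is injective, hence $\Tor_1^R(B, Z) = 0$. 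Finally, with all cycle modules $Z_n$ flat and $\Tor_1^R(B, Z_n) = 0$, applying $B \otimes_R -$ to each $0 \to Z_{n+1} \to F_n \to Z_n \to 0$ keeps it exact, and reassembling these short exact sequences shows the tensored complex is acyclic; since $B \in \mathcal{B}$ was arbitrary, the complex is $(\mathcal{B} \otimes_R -)$-acyclic.

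I expect the only real subtlety to be the bookkeeping that every cycle module of an exact complex of flats is again flat — this is standard (it follows by splicing and the fact that $\mathcal{F}(R)$ is closed under kernels of epimorphisms between flats, or equivalently by a $\Tor$-vanishing argument on syzygies) — and the observation that acyclicity of the tensored complex is equivalent to $\Tor_1^R(B, Z_n) = 0$ for all the cycles $Z_n$, which is just the long exact $\Tor$ sequence threaded through the decomposition into short exact sequences. Neither is hard, but they are where a fully rigorous write-up spends its words.
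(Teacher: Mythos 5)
Your reduction of the lemma to the vanishing of $\Tor_1^R(B,Z_n)$ for every cycle $Z_n$ of the complex is correct, but the way you then kill these $\Tor$ groups rests on a false statement: the cycles of a doubly infinite exact complex of flat modules are \emph{not} flat in general. If they were, the lemma would hold for an arbitrary class $\mathcal{B}$ with no hypothesis on $\fd(\mathcal{B})$ whatsoever, and the whole theory of Gorenstein flat modules (whose non-flat members are, by definition, cycles of such complexes) would collapse. Concretely, over $R=k[x]/(x^2)$ the complex $\cdots \to R \xrightarrow{\,x\,} R \xrightarrow{\,x\,} R \to \cdots$ is an exact complex of free modules all of whose cycles are isomorphic to $k=R/(x)$, which is not flat. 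The closure of $\Flat(R)$ under kernels of epimorphisms, which you invoke, applies to a short exact sequence $0\to Z_{n+1}\to F_n\to Z_n\to 0$ only when the quotient $Z_n$ is \emph{already known} to be flat; in a two-sided infinite complex there is no flat end from which to start the splicing, so the induction never gets off the ground. (It does work for a bounded exact complex of flats, which is perhaps the source of the confusion; the same slip appears in your justification of the base case, where the correct reason is simply that $B\otimes_R-$ is an exact functor when $B$ is flat.)

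The repair is to shift dimension in the variable $B$ rather than in $Z_n$, which is what the paper does: take a finite flat resolution $0\to P_m\to\cdots\to P_0\to B\to 0$, break it into short exact sequences $0\to K_j\to P_j\to K_{j-1}\to 0$, and tensor each with the whole complex $F^{\bullet}$. The resulting sequences of complexes are exact in each degree because each $F_i$ is flat (so $\Tor_1^R(K_{j-1},F_i)=0$), and $P_j\otimes_R F^{\bullet}$ is exact because $P_j$ is flat; descending from $K_{m-1}=P_m$ to $K_{-1}=B$ via the long exact homology sequence gives the claim. Equivalently, within your $\Tor$-theoretic framework, use the two-sided infinitude of $F^{\bullet}$: the sequences $0\to Z_{n+1}\to F_n\to Z_n\to 0$ with flat middle terms yield $\Tor_1^R(B,Z_n)\cong\Tor_2^R(B,Z_{n-1})\cong\cdots\cong\Tor_{m+1}^R(B,Z_{n-m})=0$ since $\fd(B)\le m$. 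Either route makes the hypothesis $\fd(\mathcal{B})<\infty$ do the work that your argument incorrectly asks the (non-)flatness of the cycles to do.
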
 

\begin{proof}
Assume that  $m := \fd (\mathcal{B}) < \infty$ and take $ F ^{\bullet} : \cdots \to F_1 \to F_0 \to F^0 \to F^1 \to \cdots $ an exact complex  of flat $R$-modules. Since $\fd (\mathcal{B}) = m$, for  $B \in \mathcal{B} $  there is an exact sequence $0 \to P_m \to \cdots \to P_1 \to P_0 \to B \to 0$ with all $P_{i} \in \Flat (R)$. For each  $j \in \{ 0,1, \dots , m-1\} $ consider the short exact sequences $0 \to K_j \to P_j \to K_{j-1} \to 0$ where $K_{m-1} := P_{m}$ and $K_{-1} := B$. For each $j$ we have an exact sequence of complexes $0 \to K_j \otimes _R F^{\bullet} \to P_j \otimes _R F^{\bullet} \to K_{j-1} \otimes _R F^{\bullet} \to 0$. Since $K_{m-1} = P_{m}$ we get that $K_{m-2} \otimes _R F^{\bullet}$ is acyclic, so continuing iteratively we obtain that $B \otimes _R F^{\bullet}$ is acyclic.
\end{proof}

We have now the following which extends the known in  \cite[Lemma 5.2 and Theorem 5.3]{Emmanouil}. 
Recall the notion of \textit{left  Gorenstein weak global dimension} relative to the class $\GF _{ \mathcal{B}  }$ of Gorenstein $ \mathcal{B} $-flat $R$-modules as follows
$$\mathrm{l.w.Ggl} _{\mathcal{B} }(R):= \sup \{\Gfd _{ \mathcal{B} } (M)| M \in \Modu (R)  \} .$$

\begin{pro} \label{Ikenaga}
Consider $\mathcal{B} \subseteq \Modu (R^{\op})$ a class of right $R$-modules such that  $\fd (\mathcal{B} ) < \infty$. Then $\fd (\Inj (R)) \leq n$ implies that $\mathrm{l.w.Ggl} _{\mathcal{B}} (R) \leq n$.
\end{pro}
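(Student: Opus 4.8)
The plan is to reduce the statement about $\mathrm{l.w.Ggl}_{\mathcal B}(R)$ to the already-proved Proposition~\ref{WeakCoresolved} about $\mathrm{l.w.PGFgl}_{\mathcal B}(R)$, using Lemma~\ref{tensor-exacto} as the bridge. The key observation is that under the hypothesis $\fd(\mathcal B)<\infty$, Lemma~\ref{tensor-exacto} tells us that \emph{every} exact complex of flat left $R$-modules is automatically $(\mathcal B\otimes_R-)$-acyclic. Consequently, in the defining diagram for a Gorenstein $\mathcal B$-flat module one can feed in a totally acyclic complex built from projectives (since projectives are flat) and the acyclicity condition after tensoring comes for free; more to the point, the condition $\fd(\mathcal B)<\infty$ should make the class $\GF_{\mathcal B}(R)$ behave like the ordinary Gorenstein flat modules as far as tensoring with $\mathcal B$ is concerned.

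First I would recall that by Lemma~\ref{tensor-exacto} and $\fd(\mathcal B)<\infty$, a complete flat resolution $\cdots\to F_1\to F_0\to F^0\to F^1\to\cdots$ is $(\mathcal B\otimes_R-)$-acyclic if and only if it is $(\F(R^{\op})\otimes_R-)$-acyclic, i.e. totally acyclic in the usual sense; hence $\GF_{\mathcal B}(R)$ coincides with the class of (ordinary) Gorenstein flat modules when $\fd(\mathcal B) < \infty$ — or at least the Gorenstein $\mathcal B$-flat dimension is controlled by the ordinary Gorenstein flat dimension. Next I would show that $\fd(\Inj(R))\le n$ forces $\Gfd_{\mathcal B}(\Inj(R))\le n$: for an injective module $E$, take a flat (indeed projective) resolution of length $n$; the syzygy $\Omega^n E$ is flat, hence trivially Gorenstein $\mathcal B$-flat, so $\Gfd_{\mathcal B}(E)\le n$. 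Then I would invoke the analogue of Proposition~\ref{WeakCoresolved} for the class $\GF_{\mathcal B}$ in place of $P\GF_{\mathcal B}$: the same proof goes through verbatim — one forms the coproduct of the projective syzygies and the product of the injective cosyzygies of an arbitrary $M$, builds the ``complete'' exact sequence $0\to M\oplus(C\oplus K)\to P\oplus I\to M\oplus(C\oplus K)\to 0$, applies Horseshoe's Lemma, uses $\Gfd_{\mathcal B}(P\oplus I)=\Gfd_{\mathcal B}(I)\le n$ together with the stability results for $\GF_{\mathcal B}$ (the analogue of Proposition~\ref{Estable} and the fact that $\GF_{\mathcal B}^2=\GF_{\mathcal B}$), and concludes $\Gfd_{\mathcal B}(M)\le n$. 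Since $M$ was arbitrary this gives $\mathrm{l.w.Ggl}_{\mathcal B}(R)\le n$.

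The main obstacle I anticipate is justifying that the machinery used for $P\GF_{\mathcal B}$ — namely Proposition~\ref{Estable} (stability of the dimension under syzygies), Proposition~\ref{Holm2.19} (additivity over coproducts and passage to summands), and the idempotency $P\GF^2_{\mathcal B}=P\GF_{\mathcal B}$ — has valid counterparts for $\GF_{\mathcal B}$. Unlike $P\GF_{\mathcal B}$, the class $\GF_{\mathcal B}$ is built from flats rather than projectives, so it is not obviously resolving, and closure under summands or the two-out-of-three property for syzygies may be subtler. This is precisely where the hypothesis $\fd(\mathcal B)<\infty$ should be exploited: Lemma~\ref{tensor-exacto} collapses the relative acyclicity condition to ordinary total acyclicity, so $\GF_{\mathcal B}$ inherits the good closure properties of the classical Gorenstein flat modules (for which $\GF^2 = \GF$, coproduct-additivity of $\Gfd$, and Holm-type results are known). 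I would therefore structure the argument so that all the delicate homological bookkeeping is quarantined into this one reduction, after which the proof of Proposition~\ref{WeakCoresolved} can be transcribed with only cosmetic changes. A short remark recording that $\GF_{\mathcal B}$ is closed under extensions (needed for the $\GF^2_{\mathcal B}=\GF_{\mathcal B}$ step, e.g. via \cite{ZWang}) completes the skeleton.
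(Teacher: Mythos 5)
Your strategy is genuinely different from the paper's, and it has a real gap. The pivot of your argument is the claim that, under $\fd(\mathcal{B})<\infty$, Lemma~\ref{tensor-exacto} makes $\GF_{\mathcal{B}}$ coincide with the ordinary Gorenstein flat modules, so that the closure properties needed to transcribe Proposition~\ref{WeakCoresolved} come for free. That identification is not correct: the lemma shows that \emph{every} acyclic complex of flats is $(\mathcal{B}\otimes_R-)$-acyclic, hence $\GF_{\mathcal{B}}$ becomes the class of \emph{all} cycles of acyclic complexes of flat modules. This contains $\GF_{\Inj(R^{\op})}$ but is in general strictly larger, since an acyclic complex of flats need not stay acyclic after tensoring with injectives. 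So $\GF_{\mathcal{B}}$ does not ``inherit'' the Holm-type properties of the classical class; and the weaker reduction $\Gfd_{\mathcal{B}}(M)\leq\Gfd_{\Inj(R^{\op})}(M)$ does not finish either, because bounding the classical Gorenstein weak global dimension by $n$ from $\fd(\Inj(R))\leq n$ alone (via \cite{Chris-Estrada} or \cite{Emmanouil}) also requires $\fd(\Inj(R^{\op}))<\infty$, which is not among the hypotheses. Consequently the machinery you propose to reuse verbatim --- the analogue of Proposition~\ref{Estable} (which rests on $\GF_{\mathcal{B}}$ being resolving and closed under summands), the coproduct additivity of Proposition~\ref{Holm2.19}, and the idempotency $\GF^{2}_{\mathcal{B}}=\GF_{\mathcal{B}}$ (which via \cite{ZWang} needs closure under extensions) --- is left unjustified. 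The paper itself signals that these are not free for $\GF_{\mathcal{B}}$: closure under extensions is imposed as an explicit extra hypothesis in Corollary~\ref{caracteriza}, whereas \cite[Theorem~2.10]{Estrada20} supplies these properties only for $P\GF_{\mathcal{B}}$.

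The paper's proof sidesteps all of this with a direct construction. Given $M$, take an injective coresolution $0\to M\to I^{0}\to I^{1}\to\cdots$ and, using $\fd(\Inj(R))\leq n$ together with the Cartan--Eilenberg resolution of a complex \cite[Ch.~XVII, \S1]{Cartan}, build a projective resolution of this complex truncated at step $n$: the rows $0\to Q_{i}\to P^{0}_{i}\to\cdots$ ($i<n$) consist of projectives, the $n$-th row $0\to Q\to P^{0}_{n}\to P^{1}_{n}\to\cdots$ consists of flats, and the left-hand column is an exact sequence $0\to Q\to Q_{n-1}\to\cdots\to Q_{0}\to M\to 0$. Splicing a projective resolution of $Q$ onto the $n$-th row gives an acyclic complex of flats, which by Lemma~\ref{tensor-exacto} is $(\mathcal{B}\otimes_R-)$-acyclic, so $Q\in\GF_{\mathcal{B}}$; the column then exhibits $\Gfd_{\mathcal{B}}(M)\leq n$ directly from the definition of $\resdim_{\GF_{\mathcal{B}}}$, using only that projectives lie in $\GF_{\mathcal{B}}$. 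If you want to salvage your route, you would first have to prove, for the class of cycles of acyclic complexes of flats, each of the closure properties you invoke; that is substantial work the statement does not require.
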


\begin{proof}
Take $M \in \Modu (R)$, we can consider an exact sequence  $0 \to M \to I^{0} \to I^1 \to \cdots$, with $I ^{j } \in \Inj (R)$ for all $j \geq 0$. From the inequality  $\Gfd  _{\mathcal{B} }(\Inj (R))\leq n$ and \cite[Ch. XVII, \S 1 Proposition 1.3]{Cartan} we can construct the following commutative diagram
$$\xymatrix{  
 0\ar[r] & Q_{_{}} \ar@{^{(}->}[d] \ar@{^{(}->}[r] &  P^0 _{n_{}}  \ar@{^{(}->}[d] \ar[r]& P^1 _{n_{}}  \ar@{^{(}->}[d] \ar[r] & \cdots \\
0 \ar[r]& Q_{n-1} \ar[r] \ar[d] & P^0 _{n-1} \ar[d]  \ar[r]&  P^1 _{n-1} \ar[d]  \ar[r] & \cdots  \\
    &  \vdots  \ar[d] & \vdots  \ar[d] & \vdots \ar[d] & \\
 0\ar[r] & Q_0 \ar@{>>}[d] \ar[r] & P^0 _0  \ar@{>>}[d] \ar[r]& P^1 _0  \ar@{>>}[d] \ar[r] & \cdots \\
0 \ar[r]& M \ar@{^{(}->}[r]  & I^0   \ar[r] &  I^1 \ar[r]  & \cdots  }$$
such that  $P_i ^j \in \Proj (R)$ for all $i \in \{0,1, \dots , n-1 \}$ and all $j\geq 0$, and where $P^j _n \in \Flat (R)$ for all $j \geq 0$. With  $Q _i := \Ker (P^0 _i \to P^1 _i)\in \Proj (R)$ for all $i \in \{0 , \dots , n-1\}$ and 
\begin{center}
$\lambda  : 0 \to Q \to P_n ^0 \to P_n ^1 \to \cdots$\\
 $\theta : 0 \to Q \to Q_{n-1} \to \cdots \to Q_0 \to M \to 0$
 \end{center}
  exact complexes. We can complete $\lambda$  to an exact complex of  modules in $\Flat (R)  $ by consider an projective resolution of $Q$, 
  $$ \cdots \to P_1 \to P_0 \to Q \to 0.$$ 
  Since $\fd (\mathcal{B} ) < \infty$ we know from Lemma \ref{tensor-exacto} that the exact complex of flat $R$-modules  resulting $$\cdots \to P_1 \to P_0 \to P_n ^0 \to P_n ^1 \to \cdots  ,$$
   is $(\mathcal{B} \otimes _R -)$-acyclic, therefore $Q \in  \GF  _{\mathcal{B}} $. Thus from the exact sequence $\theta$ we conclude that $\Gfd _{\mathcal{B} } (M) \leq n $. 
\end{proof}
The special case when $\mathcal{B} : = \Inj (R^{\op})$  has been studied by Emmanouil  \cite[Theorem 5.3]{Emmanouil}, where the condition $\mathrm{l.w.Ggl} _{\Inj (R) }(R) < \infty$  is assumed. Note that in our previous result we drop this finiteness condition. Recently this has also been noted for this special case by L. W.  Christensen, S. Estrada and P. Thompson in  \cite[Theorem 2.4]{Chris-Estrada} where also is studied the symmetric properties of the Gorenstein weak global dimensions. We will study this notion later for a suitable family of $R$-modules.

From  the results in \cite{Becerril22}, we have additionally a characterization of the left   Gorenstein weak global dimension relative to the class $\GF _{\mathcal{B}}$.

\begin{cor} \label{caracteriza}
Let  $\mathcal{B} \subseteq \Modu (R^{\op})$ be a class of right $R$-modules with  $\Inj (R^{\op}) \subseteq \mathcal{B}$ and such that  $\GF _{\mathcal{B}}$ is closed by extensions. Then the quantity $\mathrm{l.w.Ggl} _{\mathcal{B}} (R) $ is finite if and only if both  $\fd (\mathcal{B}) $ and $\fd  (\Inj (R))$ are finite.
\end{cor}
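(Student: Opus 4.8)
The plan is to derive this as a corollary of Proposition~\ref{Ikenaga} together with the converse direction supplied by the theory in \cite{Becerril22}, using the hypothesis $\Inj(R^{\op})\subseteq\mathcal{B}$ to control the vanishing of $\Tor$. The ``if'' direction is essentially immediate: assuming $\fd(\mathcal{B})<\infty$ and $\fd(\Inj(R))\leq n$, Proposition~\ref{Ikenaga} gives $\mathrm{l.w.Ggl}_{\mathcal{B}}(R)\leq n<\infty$ at once, since $\GF_{\mathcal{B}}$ being closed under extensions is not even needed here. So the content is the ``only if'' direction.

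For the ``only if'' direction, suppose $\mathrm{l.w.Ggl}_{\mathcal{B}}(R)=n<\infty$. First I would show $\fd(\Inj(R))<\infty$: the subclass $\Inj(R)\subseteq\Modu(R)$ satisfies $\Gfd_{\mathcal{B}}(\Inj(R))\leq n$ by definition of the relative weak global dimension, and then I would invoke a result from \cite{Becerril22} (the analogue, in the $\GF_{\mathcal{B}}$ setting, of the fact that a module of finite Gorenstein flat dimension which is injective — or more relevantly, the comparison $\fd \leq \Gfd_{\mathcal{B}} + {}$(something finite)) to conclude $\fd(\Inj(R))<\infty$. Concretely, because $\GF_{\mathcal{B}}$ is closed under extensions, the pair $(\GF_{\mathcal{B}}, \GF_{\mathcal{B}}^{\ortogonal_1}\cap\dots)$ behaves well enough that any $M$ with $\Gfd_{\mathcal{B}}(M)\leq n$ fits in an exact sequence $0\to M\to H\to G\to 0$ with $G\in\GF_{\mathcal{B}}^{\gorro}_{n-1}$ and $H\in\GF_{\mathcal{B}}$; combined with $\fd(\GF_{\mathcal{B}})$ being finite (which follows because a Gorenstein $\mathcal{B}$-flat module is a kernel inside an exact complex of flats, hence has flat dimension bounded once we know flats have bounded... actually one needs $\Inj(R^{\op})\subseteq\mathcal{B}$ here), one gets $\fd(M)<\infty$ for all injective $M$.

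Next I would show $\fd(\mathcal{B})<\infty$, and this is where $\Inj(R^{\op})\subseteq\mathcal{B}$ does the real work. The point is that for $G\in\GF_{\mathcal{B}}$ and any $B\in\mathcal{B}$ one has $\Tor_i^R(B,G)=0$ for all $i>0$ — this is built into the definition of Gorenstein $\mathcal{B}$-flat, since the defining complex stays acyclic after applying $B\otimes_R-$. Taking $G$ to be a Gorenstein $\mathcal{B}$-flat ``approximation'' of an arbitrary module, or more directly taking a module $M$ with $\Gfd_{\mathcal{B}}(M)\leq n$ and dimension-shifting, one gets $\Tor_i^R(B,M)=0$ for all $i>n$ and all $M$, i.e. the right $R$-module $B$ has flat dimension $\leq n$ (testing against all left modules $M$ detects $\fd$). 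Hence $\fd(\mathcal{B})\leq n<\infty$. The condition $\Inj(R^{\op})\subseteq\mathcal{B}$ is what makes $\GF_{\mathcal{B}}$ a sufficiently restrictive class that the cited machinery of \cite{Becerril22} applies and that Gorenstein $\mathcal{B}$-flat modules genuinely have the $\Tor$-vanishing property against every module of finite $\mathcal{B}$-flat dimension.

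The main obstacle I anticipate is making the ``only if'' direction rigorous without reproving the structure theory: I am relying on \cite{Becerril22} to supply (a) the existence of the relative approximation sequences that turn $\Gfd_{\mathcal{B}}(M)\leq n$ into a usable factorization, and (b) the fact that $\fd(\GF_{\mathcal{B}})<\infty$ under the stated hypotheses. If those inputs are available in the form I expect, the corollary is a short assembly; if not, one would have to unwind the definition of $\GF_{\mathcal{B}}$ directly — building a complete flat resolution, splicing, and extracting $\Tor$ bounds by hand — which is routine but longer. I would therefore organize the write-up as: (1) ``if'' via Proposition~\ref{Ikenaga}; (2) ``only if'', step one, $\fd(\Inj(R))<\infty$ from $\Gfd_{\mathcal{B}}(\Inj(R))\leq n$ and the comparison theorem of \cite{Becerril22}; (3) ``only if'', step two, $\fd(\mathcal{B})<\infty$ from the $\Tor$-vanishing intrinsic to $\GF_{\mathcal{B}}$ together with dimension shifting and $\Inj(R^{\op})\subseteq\mathcal{B}$.
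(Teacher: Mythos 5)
Your ``if'' direction and your argument for $\fd(\mathcal{B})<\infty$ are both sound and essentially match the paper: the vanishing $\Tor_i^R(B,G)=0$ for $G\in\GF_{\mathcal{B}}$, $B\in\mathcal{B}$ and $i>0$ is read off from the left half of the defining complex, and dimension shifting along a length-$n$ $\GF_{\mathcal{B}}$-resolution of an arbitrary left module gives $\fd(B)\le n$; this is precisely what the paper outsources to \cite[Proposition 4.1]{Becerril22}. Note also that the closure under extensions is not idle decoration here: it is what guarantees $\Gfd_{\mathcal{B}}$ behaves like a resolution dimension, but your $\Tor$-shifting step itself only needs the resolution to exist.

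The genuine gap is in your step for $\fd(\Inj(R))<\infty$. There is no comparison theorem of the form $\fd\le\Gfd_{\mathcal{B}}+(\mbox{finite constant})$, and your claim that $\fd(\GF_{\mathcal{B}})$ is finite ``because a Gorenstein $\mathcal{B}$-flat module is a kernel inside an exact complex of flats'' is false: the left half of that complex is a flat resolution that never terminates, and a Gorenstein flat module of finite flat dimension is already flat. (Over a quasi-Frobenius non-semisimple ring with $\mathcal{B}=\Inj(R^{\op})$ every module has $\Gfd_{\mathcal{B}}=0$, yet not every module is flat.) So finiteness of $\Gfd_{\mathcal{B}}(I)$ for injective $I$ cannot, module by module, yield $\fd(I)<\infty$. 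The paper's route is global rather than per-module: since $\Inj(R^{\op})\subseteq\mathcal{B}$, every $(\mathcal{B}\otimes_R-)$-acyclic complex is in particular $(\Inj(R^{\op})\otimes_R-)$-acyclic, so $\GF_{\mathcal{B}}\subseteq\GF_{\Inj(R^{\op})}$ and hence $\mathrm{l.w.Ggl}_{\Inj(R^{\op})}(R)\le\mathrm{l.w.Ggl}_{\mathcal{B}}(R)<\infty$; one then invokes \cite[Theorem 2.4]{Chris-Estrada} (resting on Emmanouil's characterization of rings of finite classical Gorenstein weak global dimension) to conclude $\fd(\Inj(R))<\infty$. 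This is the one place the hypothesis $\Inj(R^{\op})\subseteq\mathcal{B}$ is genuinely used, and it cannot be replaced by the comparison you propose.
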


\begin{proof}
From the previous result we only need to prove $\Rightarrow $). For do this, from \cite[Proposition 4.1]{Becerril22} we have that $\fd (\mathcal{B}) < \infty$. Since $\Inj (R^{\op})  \subseteq \mathcal{B}$, then the containment  $\GF _{\mathcal{B}} \subseteq \GF _{\Inj (R^{\op})}$ is given. Thus  for all $M \in \Modu (R)$ their Gorenstein flat dimensions are such that $$\Gfd _{\Inj (R^{\op})} (M) \leq \Gfd _{\mathcal{B}} (M),$$ thus the finiteness of $\mathrm{l.w.Ggl} _{\mathcal{B}} (R) $ implies the finiteness of the classical  Gorenstein weak global dimension  $\mathrm{l.w.Ggl} _{\Inj (R^{\op})} (R) $. From  \cite[Theorem 2.4]{Chris-Estrada} we obtain  that $\fd (\Inj (R)) < \infty$.
\end{proof}

\subsection{Applications}
In a stimulating talk given by Daniel Bravo for the University of the Republic of Uruguay, the speaker mentioned several of the known properties related to the class of $R$-modules of type FP$_n$, as well as possible directions in which their study could go. It is partly for this reason that we have been interested in studying such class of $R$-modules and contributing with  some  results. In what follows we study the weak global dimension concerning the class of Gorenstein $\mathcal{FP}_n$-flat $R$-modules coming  from the class of right $\mathcal{FP}_n$-injectives. 
\begin{ex}
 We address here the main example of our theory.
\begin{enumerate}
\item An $R$-module $F$ is of \textbf{type FP$_{\infty}$}  \cite{BGH} if it has a projective resolution $\cdots \to P_1 \to P_0 \to F \to 0$, where each $P_i$ is finitely generated.  An $R$-module $A$ is \textbf{absolutely clean} if $\Ext ^{1} _{R} (F,A) =0$ for all $R$-modules $F$ of type FP$_{\infty}$. Also an $R$-module $L$ is \textbf{level} if $\mathrm{Tor} _{1} ^{R} (F,L) =0$ for all (right) $R$-module $F$ of type FP$_{\infty}$. We denote by $\mathrm{AC} (R)$ (also denoted $\mathcal{FP}_{\infty} \mbox{-}Inj (R)$  \cite[Definition 3.1]{BP}) the class  of all absolutely clean $R$-modules, and by  $\mathrm{Lev} (R)$ (also denoted $\mathcal{FP}_{\infty} \mbox{-}Flat (R)$ \cite[Definition 3.2]{BP})  the class of all level $R$-modules. 
 The class $\GF _{(\Flat (R) , \mathrm{AC}(R ^{\op}))}$ is called \textbf{Gorenstein AC-flat} \cite{BEI}.
\item $M \in \Modu (R)$ is called \textbf{finitely $n$-presented} if $M$ possesses a projective resolution $0 \to P_n \to \cdots \to P_1 \to P_0 \to M \to 0$ for which each $P_i$ is finitely generated free. We denote by $\mathcal{FP}_n(R)$ the class of all finite $n$-presented $R$-modules.  An $R$-module $E$ is called \textbf{$\mathcal{FP}_n$-injective} if for all $M \in \mathcal{FP}_n(R)$, $\Ext _R ^1 (M, E) =0$. Also an $R^{\op}$-module $H$ is called \textbf{$\mathcal{FP}_n$-flat} if for all $M \in \mathcal{FP}_n(R)$, $\Tor _1 ^R (H, M) =0$.
These classes are denoted by   $\mathcal{FP}_n\mbox{-}Inj (R )$  and $ \mathcal{FP}_n\mbox{-}Flat (R^{\op}) $, respectively.
\item  There is a tern of Gorenstein modules that comes from the classes  $ \mathcal{FP}_n\mbox{-}Flat (R) $ and $\mathcal{FP}_n\mbox{-}Inj (R)$, such classes of   Gorenstein $R$-modules  have been studied recently in \cite{Alina20} where they are called as  \textbf{Gorenstein $\mathcal{FP}_n$-projective}, \textbf{Gorenstein $\mathcal{FP}_n$-injective} and \textbf{Gorenstein $\mathcal{FP}_n$-flat} and here respectively denoted  by $\GP_{\mathcal{FP}_n\mbox{-}Flat (R) }$, $\GI _{\mathcal{FP}_n\mbox{-}Inj(R) }$, $\GF _{\mathcal{FP}_n\mbox{-}Inj(R^{\op})}$.   These classes were also studied in \cite{Estrada20}, where also is considered the class of \textbf{projectively coresolved Gorenstein $\mathcal{FP}_n$-flat} $R$-modules, denoted $\Proj\GF _{\mathcal{FP}_n\mbox{-}Inj(R^{\op})}$.
\end{enumerate}
\end{ex}

\begin{rk} \label{Remark1}
\begin{enumerate}
 \item We know by \cite[Proposition 3.10]{BP} and from \cite[Example 2.21 (3)]{Estrada20}, that the class $\GF _{ \mathcal{FP}_n\mbox{-}Inj (R^{\op})}$ is closed by extensions.
 \item Note that the pairs $(\Proj (R),\mathcal{FP}_n\mbox{-}Flat (R) )$ and $(\mathcal{FP}_n\mbox{-}Inj (R), \Inj (R))$ are GP-admisible and GI-admisible respectively \cite[Definitions 3.1 \& 3.6]{BMS}.
 \end{enumerate}
\end{rk}

It is clear that always there is a containment $\mathcal{FP}_n(R) \supseteq \mathcal{FP}_{n+1}(R)$, i.e. there is an descending chain 
$$\mathcal{FP}_0(R) \supseteq \mathcal{FP}_1(R) \supseteq \cdots \supseteq \mathcal{FP}_{n+1}(R)  \supseteq \cdots \supseteq \mathcal{FP}_{\infty}(R),$$ 
can bee seen also that $R$ is left coherent, if and only if, the containment  $\mathcal{FP}_1(R) \subseteq \mathcal{FP}_{2}(R)$ is given \cite[Proposition 2.1]{BP}. Also know that $R$ is left Noetherian if and only if every module finitely generated is finitely presented, i.e. $\mathcal{FP}_0(R) \subseteq \mathcal{FP}_1(R)$.  From this we have the following notion given by D. Bravo and M. A. P\'erez.

\begin{defi}\cite[Definition 2.2]{BP} \label{G-coherent}
A ring $R$ is left $n$-coherent if every finitely $n$-presented left $R$-module is finitely $(n+1)$-presented, that is $\mathcal{FP}_n(R) \subseteq \mathcal{FP}_{n+1}(R)$.
\end{defi}

J. P. Wang, Z. K. Liu, X.Y. Yang \cite{Wang18} introduced the notion of \textit{Goresntein $n$-coherent ring}. It is defined as a two sided  $n$-coherent ring $R$ for which  there is some  $m \in \mathbb{Z} _{\geq 0}$ such that $\fd (M), \fd (N) \leq m$ for all $M \in \mathcal{FP}_n\mbox{-}Inj (R)$ and $N \in \mathcal{FP}_n\mbox{-}Inj (R^{\op})$. In this context  we have that $\fd (\Inj (R)) < \infty$ since $\Inj (R) \subseteq \mathcal{FP}_n\mbox{-}Inj (R)$. This  implies that $\Gfd _{\mathcal{FP}_n\mbox{-}Inj (R^{\op})} (\Inj (R)) < \infty$. Therefore, from Corollary \ref{caracteriza} we conclude that the dimension  $\mathrm{l.w.Ggl}_{\mathcal{FP}_n\mbox{-}Inj (R^{\op})}(R)$ is finite. We declare this fact  as follows.

\begin{pro}
Let  $R$ be a Gorenstein n-coherent ring and $\mathcal{FP}_n\mbox{-}Inj (R^{\op})$ the class of right $\mathcal{FP}_n$-injective $R$-modules. Then the left  Gorenstein weak global dimension relative to the class $\GF_{\mathcal{FP}_n\mbox{-}Inj (R^{\op})}$
$$\mathrm{l.w.Ggl}_{ \mathcal{FP}_n\mbox{-}Inj (R^{\op})}(R)$$  is finite and coincides with $\fd (\mathcal{FP}_n\mbox{-}Inj (R^{\op}))$.
\end{pro}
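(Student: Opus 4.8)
The plan is to trap the number $\mathrm{l.w.Ggl}_{\mathcal{FP}_n\mbox{-}Inj (R^{\op})}(R)$ between $\fd(\mathcal{FP}_n\mbox{-}Inj (R^{\op}))$ from below and $\fd(\mathcal{FP}_n\mbox{-}Inj (R))$ from above, and then to collapse the two outer terms to a common value by running exactly the same two estimates over $R^{\op}$, which is again Gorenstein $n$-coherent. Finiteness of $\mathrm{l.w.Ggl}_{\mathcal{FP}_n\mbox{-}Inj (R^{\op})}(R)$ has already been obtained in the discussion preceding the statement, through Corollary \ref{caracteriza} together with the hypothesis that $R$ is Gorenstein $n$-coherent; throughout I write $\mathcal{B} := \mathcal{FP}_n\mbox{-}Inj (R^{\op})$ for brevity.

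For the upper bound I would use that $\Inj(R) \subseteq \mathcal{FP}_n\mbox{-}Inj (R)$ (every injective kills $\Ext^1_R(M,-)$, in particular for $M \in \mathcal{FP}_n(R)$), a containment already invoked in the text, so that $\fd(\Inj(R)) \leq \fd(\mathcal{FP}_n\mbox{-}Inj (R)) < \infty$, the finiteness being part of the definition of a Gorenstein $n$-coherent ring. Since $\fd(\mathcal{B}) < \infty$ as well, Proposition \ref{Ikenaga} applies and gives
$$\mathrm{l.w.Ggl}_{\mathcal{B}}(R) \leq \fd(\mathcal{FP}_n\mbox{-}Inj (R)).$$

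For the lower bound I would argue with $\Tor$. If $G \in \GF_{\mathcal{B}}$, then $G = \Ima(F_0 \to F^0)$ for an exact complex of flat left $R$-modules $\cdots \to F_1 \to F_0 \to F^0 \to F^1 \to \cdots$ that remains exact under $B \otimes_R -$ for every $B \in \mathcal{B}$; its left half $\cdots \to F_1 \to F_0 \to G \to 0$ is then a flat resolution of $G$ which stays exact after applying $B \otimes_R -$, whence $\Tor^R_i(B,G) = 0$ for all $i \geq 1$ and all $B \in \mathcal{B}$. Now set $k := \mathrm{l.w.Ggl}_{\mathcal{B}}(R) < \infty$, take an arbitrary left $R$-module $M$, pick a $\GF_{\mathcal{B}}$-resolution $0 \to G_k \to G_{k-1} \to \cdots \to G_0 \to M \to 0$, and shift dimensions: since each $G_i \in \GF_{\mathcal{B}}$ is $\Tor$-acyclic against $\mathcal{B}$ in positive degrees, one gets $\Tor^R_{k+j}(B,M) \cong \Tor^R_j(B,G_k) = 0$ for all $j \geq 1$. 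As $M$ was arbitrary this says $\fd(B) \leq k$ for every $B \in \mathcal{B}$, i.e.
$$\fd(\mathcal{FP}_n\mbox{-}Inj (R^{\op})) \leq \mathrm{l.w.Ggl}_{\mathcal{B}}(R).$$

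Combining the two displays yields $\fd(\mathcal{FP}_n\mbox{-}Inj (R^{\op})) \leq \fd(\mathcal{FP}_n\mbox{-}Inj (R))$. Because $R^{\op}$ is itself Gorenstein $n$-coherent, the same pair of estimates applied to $R^{\op}$ gives the reverse inequality $\fd(\mathcal{FP}_n\mbox{-}Inj (R)) \leq \fd(\mathcal{FP}_n\mbox{-}Inj (R^{\op}))$, so the two flat dimensions agree, and $\mathrm{l.w.Ggl}_{\mathcal{B}}(R)$, being squeezed between them, coincides with $\fd(\mathcal{FP}_n\mbox{-}Inj (R^{\op}))$, as claimed. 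The step needing the most care is the lower bound: one has to check cleanly that the defining exact complex of a Gorenstein $\mathcal{B}$-flat module really furnishes a flat resolution that is $\Tor$-acyclic against every $B \in \mathcal{B}$, and that the dimension-shifting along a $\GF_{\mathcal{B}}$-resolution is legitimate because every term $G_i$ of the resolution enjoys that same $\Tor$-vanishing; everything else is routine once Proposition \ref{Ikenaga} and Corollary \ref{caracteriza} are available.
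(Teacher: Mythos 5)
Your argument is correct, but it is not the route the paper takes, and it in fact delivers more than the statement asks for. The paper's own justification is the paragraph immediately preceding the proposition: Gorenstein $n$-coherence gives $\fd(\mathcal{FP}_n\mbox{-}Inj(R^{\op}))<\infty$ and $\fd(\Inj(R))\leq\fd(\mathcal{FP}_n\mbox{-}Inj(R))<\infty$, whence Corollary \ref{caracteriza} yields finiteness; the claimed coincidence with $\fd(\mathcal{FP}_n\mbox{-}Inj(R^{\op}))$ is simply declared there, with the lower bound implicit in \cite[Proposition 4.1]{Becerril22} (your Tor-vanishing of Gorenstein $\mathcal{B}$-flats against $\mathcal{B}$ plus dimension shifting is a correct, self-contained proof of that bound). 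What you do genuinely differently is close the gap between the lower and upper estimates by exploiting the two-sidedness of the Gorenstein $n$-coherent hypothesis: writing $\mathcal{B}=\mathcal{FP}_n\mbox{-}Inj(R^{\op})$ and $\mathcal{B}'=\mathcal{FP}_n\mbox{-}Inj(R)$, your two estimates applied over $R$ and over $R^{\op}$ assemble into the cyclic chain
$$\fd(\mathcal{B})\leq \mathrm{l.w.Ggl}_{\mathcal{B}}(R)\leq \fd(\Inj(R))\leq \fd(\mathcal{B}')\leq \mathrm{r.w.Ggl}_{\mathcal{B}'}(R^{\op})\leq \fd(\Inj(R^{\op}))\leq \fd(\mathcal{B}),$$
which forces every term to be equal. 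This not only proves the proposition but also establishes $\fd(\mathcal{FP}_n\mbox{-}Inj(R))=\fd(\mathcal{FP}_n\mbox{-}Inj(R^{\op}))$ and the equality of the left and right relative Gorenstein weak global dimensions for Gorenstein $n$-coherent rings --- precisely the identity the paper later says the author is ``not sure'' about and merely assumes as a hypothesis in the subsequent symmetry corollary. The only points to state explicitly if you write this up are that $R^{\op}$ is again Gorenstein $n$-coherent (immediate from the two-sided definition), and that the right-hand versions of Proposition \ref{Ikenaga} and of the lower bound are obtained by applying the same statements to the ring $R^{\op}$, as the paper itself does in Proposition \ref{Ultimo}.
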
 \label{Interaccion}

Note that in the sense of Definition \ref{G-coherent} a Gorenstein ring is precisely a Gorenstein $0$-coherent  ring, and a Gorenstein $1$-coherent ring is precisely a Ding-Chen ring \cite{DingChen}. Thus, the previous result  recovers   the known for these rings \cite[Lemma 1.2]{Wang23}, and  extends it to its relative version. 

We will now see how the global dimensions of different kinds of relative Gorenstein $R$-modules interact with each other, for this we recall some notation adapted to our context. 
\begin{defi}\cite[Definition 4.17]{BMS} Let $\X,\Y$ be a pair of classes of left $R$-modules. Consider the following  homological dimensions. 
\begin{itemize}
\item[(1)] The {\bf global  $\X$-Gorenstein projective dimension} of $R$ 
\begin{center} $\glGPD_{\X}(R):=\sup \{ \Gpd_{\X} (M)|  M \in \Modu (R) \}.$\end{center}
\item[(2)] The {\bf global  $\Y$-Gorenstein injective dimension}  of $R$  
\begin{center}$\glGID_{\Y}(R):= \sup \{ \Gid_{\Y}(M) | M \in \Modu (R) \}. $\end{center}
\end{itemize}
\end{defi}
In our context we will work with the classes $ \X = \mathcal{FP}_n\mbox{-}Flat (R)$ and $\Y = \mathcal{FP}_n\mbox{-}Inj (R)$ as follows.
\begin{pro} \label{Globales-iguales}
Let $R$ be a ring such that $\fd (\mathcal{FP}_n\mbox{-}Inj (R^{\op})) < \infty$ and $\pd (\Inj (R)) < \infty$. Then, the following statements are  true:
\begin{itemize}
\item[(i)] $\mathrm{l.w.PGFgl}_{ \mathcal{FP}_n\mbox{-}Inj (R^{\op})}(R)$ is finite,
\item[(ii)]  $\id (\mathcal{FP}_n\mbox{-}Flat (R))$ and $\mathrm{gl.GPD} _{\mathcal{FP}_n\mbox{-}Flat (R)} (R)$ are finite for $n> 1$,
\item[(iii)] $\pd (\mathcal{FP}_n\mbox{-} Inj (R)) $ and $\mathrm{gl.GID}_{\mathcal{FP}_n\mbox{-}Inj (R)} (R)$ are finite, for $n> 1$.
\end{itemize}
\end{pro}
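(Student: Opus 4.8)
The plan is to reduce everything to the two Gorenstein weak global dimension results already established (Propositions~\ref{WeakCoresolved} and \ref{Ikenaga}) together with the machinery of GP- and GI-admissible pairs from \cite{BMS} and the relation between $\mathcal{FP}_n$-flat, $\mathcal{FP}_n$-injective, and flat/injective modules for $n>1$. First I would observe that the hypothesis $\fd(\mathcal{FP}_n\mbox{-}Inj(R^{\op}))<\infty$ makes $\mathcal{B}:=\mathcal{FP}_n\mbox{-}Inj(R^{\op})$ a class with $\fd(\mathcal{B})<\infty$, so Lemma~\ref{tensor-exacto} applies; and the hypothesis $\pd(\Inj(R))<\infty$ gives in particular $\fd(\Inj(R))<\infty$, so that $P\Gfd_{\mathcal{B}}(\Inj(R))\leq \pd(\Inj(R))<\infty$ because projective resolutions of injectives of finite projective dimension are themselves short, and finitely-long projective resolutions exhibit a module as projective coresolved Gorenstein $\mathcal{B}$-flat of finite dimension. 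This yields (i): apply Proposition~\ref{WeakCoresolved} with $n:=\max\{\fd(\mathcal{B}),\,P\Gfd_{\mathcal{B}}(\Inj(R))\}$, both of which are finite, to conclude $\mathrm{l.w.PGFgl}_{\mathcal{B}}(R)<\infty$.

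For (ii) and (iii) the idea is to use that when $n>1$ the ring need not be coherent, but the containment $\mathcal{FP}_n(R)\subseteq\mathcal{FP}_{n+1}(R)$ is irrelevant here; what matters is that $\mathcal{FP}_n\mbox{-}Flat(R)$ and $\mathcal{FP}_n\mbox{-}Inj(R)$ sit between the flat/injective modules and all modules in a controlled way, and that $(\Proj(R),\mathcal{FP}_n\mbox{-}Flat(R))$ and $(\mathcal{FP}_n\mbox{-}Inj(R),\Inj(R))$ are GP-admissible and GI-admissible (Remark~\ref{Remark1}(2)). Under a GP-admissible pair $(\Proj,\X)$, the general theory of \cite[\S4]{BMS} expresses $\glGPD_{\X}(R)$ in terms of $\id_{\X}$ of the projectives (or of $R$) plus the relative Gorenstein flat data; the finiteness of $\mathrm{l.w.PGFgl}_{\mathcal{B}}(R)$ from part (i), combined with $\pd(\Inj(R))<\infty$, should bound $\id(\mathcal{FP}_n\mbox{-}Flat(R))$ first — here one uses that for $n>1$ every $\mathcal{FP}_n$-flat module has finite flat dimension controlled by $\fd(\mathcal{FP}_n\mbox{-}Inj(R^{\op}))$ via a double-dual / character module argument, and finite flat dimension together with $\pd(\Inj(R))<\infty$ gives finite injective dimension — and then feed this back into the \cite{BMS} formula to bound $\glGPD_{\mathcal{FP}_n\mbox{-}Flat(R)}(R)$. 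Statement (iii) is the dual: from $\pd(\mathcal{FP}_n\mbox{-}Inj(R))<\infty$ (obtained from $\pd(\Inj(R))<\infty$ and the finite flat dimension of $\mathcal{FP}_n$-injectives over the hypotheses, again for $n>1$) and the GI-admissibility of $(\mathcal{FP}_n\mbox{-}Inj(R),\Inj(R))$, the \cite{BMS} formula for $\glGID_{\Y}(R)$ with $\Y=\mathcal{FP}_n\mbox{-}Inj(R)$ becomes finite.

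The main obstacle I expect is the bookkeeping in steps (ii)--(iii): precisely identifying which result of \cite{BMS} converts a finite relative Gorenstein weak/flat global dimension plus finite $\pd(\Inj(R))$ into a finite $\id(\mathcal{FP}_n\mbox{-}Flat(R))$, and then into a finite $\glGPD$, and making sure the hypothesis $n>1$ is used exactly where it is needed (namely to guarantee $\mathcal{FP}_n\mbox{-}Flat(R)$ and $\mathcal{FP}_n\mbox{-}Inj(R)$ have uniformly finite flat dimension, which can fail for $n\in\{0,1\}$ without a coherence assumption). A secondary subtlety is the passage between \emph{flat} dimension and \emph{projective}/\emph{injective} dimension of these classes, which requires $\pd(\Inj(R))<\infty$ to upgrade $\fd$ to $\id$ and $\pd$; I would isolate this as a short lemma-style remark inside the proof rather than invoke it implicitly.
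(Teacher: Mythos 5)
Part (i) of your proposal matches the paper's argument: $\Proj(R)\subseteq P\GF_{\mathcal{FP}_n\mbox{-}Inj(R^{\op})}$ gives $P\Gfd_{\mathcal{B}}(\Inj(R))\leq\pd(\Inj(R))<\infty$, and Proposition \ref{WeakCoresolved} does the rest. For (ii) and (iii), however, your plan has genuine gaps. The ingredient you are missing is Iacob's identification $P\GF_{\mathcal{FP}_n\mbox{-}Inj(R^{\op})}=\GP_{\mathcal{FP}_n\mbox{-}Flat(R)}$, valid precisely for $n>1$; this is what turns the finiteness of $\mathrm{l.w.PGFgl}_{\mathcal{FP}_n\mbox{-}Inj(R^{\op})}(R)$ from (i) into the finiteness of $\glGPD_{\mathcal{FP}_n\mbox{-}Flat(R)}(R)$, and it is the only place $n>1$ is needed --- not, as you guess, to give the classes $\mathcal{FP}_n\mbox{-}Flat(R)$ and $\mathcal{FP}_n\mbox{-}Inj(R)$ uniformly finite flat dimension. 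Your logical order is also reversed: the paper deduces $\id(\mathcal{FP}_n\mbox{-}Flat(R))<\infty$ \emph{from} $\glGPD_{\mathcal{FP}_n\mbox{-}Flat(R)}(R)<\infty$ (via GP-admissibility of $(\Proj(R),\mathcal{FP}_n\mbox{-}Flat(R))$ and \cite[Proposition 3.7(i)]{Becerril22-1}), whereas you try to establish $\id(\mathcal{FP}_n\mbox{-}Flat(R))<\infty$ first by a character-module argument. That argument does not work as stated: if $H\in\mathcal{FP}_n\mbox{-}Flat(R)$ then $H^+\in\mathcal{FP}_n\mbox{-}Inj(R^{\op})$ and the hypothesis bounds $\fd(H^+)=\id(H^{++})$, which controls neither $\fd(H)$ nor $\id(H)$, since $H$ is only a pure submodule of $H^{++}$.

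In (iii) there is a second, sharper problem: you invoke ``the finite flat dimension of $\mathcal{FP}_n$-injectives over the hypotheses,'' but the hypothesis concerns the \emph{right}-module class $\mathcal{FP}_n\mbox{-}Inj(R^{\op})$, and nothing in the statement gives $\fd(\mathcal{FP}_n\mbox{-}Inj(R))<\infty$ for the left-module class (the paper itself later remarks that it does not know whether these two quantities agree in general). The paper avoids this by a different route: from (ii) one has $\id(\Proj(R))\leq\id(\mathcal{FP}_n\mbox{-}Flat(R))<\infty$, and together with $\pd(\Inj(R))<\infty$ one applies \cite[Corollary 8.7]{Huerta} to pass from $\glGPD_{\mathcal{FP}_n\mbox{-}Flat(R)}(R)<\infty$ to $\glGID_{\mathcal{FP}_n\mbox{-}Inj(R)}(R)<\infty$; only then is $\pd(\mathcal{FP}_n\mbox{-}Inj(R))<\infty$ recovered, as an output of the dual of \cite[Proposition 3.7(i)]{Becerril22-1}, not as an input. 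You should restructure (ii)--(iii) around these three references rather than around a flat-dimension transfer that the hypotheses do not support.
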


\begin{proof}
(i) Since $\pd (\Inj (R)) < \infty$ and $\Proj (R) \subseteq \Proj \GF_{\mathcal{FP}_n\mbox{-}Inj (R^{\op})}$ it follows that $P\Gfd (\Inj (R)) \leq \pd (\Inj (R))$. Therefore the result follows from  Proposition \ref{WeakCoresolved}.

(ii) We know from \cite[Theorem 3.6]{Alina20} that $\Proj \GF _{\mathcal{FP}_n\mbox{-}Inj (R^{\op})} = \GP _{\mathcal{FP}_n\mbox{-}Flat (R)}$ for $n>1$. From (i), Remark \ref{Remark1} (2) and  \cite[Proposition 3.7 (i)]{Becerril22-1} we get $\id (\mathcal{FP}_n\mbox{-}Flat (R)) < \infty$, furthermore $\mathrm{gl.GPD} _{\mathcal{FP}_n\mbox{-}Flat (R)} (R) < \infty$.

(iii) From (ii) we know that $\id (\mathcal{FP}_n\mbox{-}Flat (R)) < \infty$, since $\Proj (R) \subseteq \mathcal{FP}_n\mbox{-}Flat (R)$ then $\id (\Proj (R)) < \infty$. Thus by \cite[Corollary 8.7]{Huerta} and (ii) we get $\mathrm{gl.GID} _{\mathcal{FP}_n\mbox{-}Inj (R)}(R) < \infty$. This implies by the dual of  \cite[Proposition 3.7 (i)]{Becerril22-1} and Remark \ref{Remark1} (2) that $\pd (\mathcal{FP}_n\mbox{-} Inj (R)) < \infty$. 
\end{proof}

\begin{cor}
Every Gorenstein $n$-coherent ring $R$, with $\pd (\Inj (R)) < \infty$ is such that the quantities  
$$\mathrm{l.w.PGFgl}_{ \mathcal{FP}_n\mbox{-}Inj (R^{\op})}(R), \; \; \mathrm{gl.GPD}_{ \mathcal{FP}_n\mbox{-}Flat (R)}(R), \; \; \mathrm{gl.GID}_{ \mathcal{FP}_n\mbox{-}Inj (R)}(R )$$
 are all finite for $n> 1$. 
\end{cor}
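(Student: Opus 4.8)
The plan is to derive this corollary directly by checking that a Gorenstein $n$-coherent ring with $\pd(\Inj(R)) < \infty$ satisfies the two hypotheses of Proposition~\ref{Globales-iguales}. The first hypothesis, $\fd(\mathcal{FP}_n\mbox{-}Inj(R^{\op})) < \infty$, is immediate from the definition of a Gorenstein $n$-coherent ring: by definition there is some $m \in \mathbb{Z}_{\geq 0}$ with $\fd(N) \leq m$ for all $N \in \mathcal{FP}_n\mbox{-}Inj(R^{\op})$, so $\fd(\mathcal{FP}_n\mbox{-}Inj(R^{\op})) \leq m < \infty$. The second hypothesis, $\pd(\Inj(R)) < \infty$, is exactly the standing assumption we have placed on $R$ in the corollary's statement. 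Hence Proposition~\ref{Globales-iguales} applies verbatim.

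Having verified both hypotheses, I would simply invoke Proposition~\ref{Globales-iguales} parts (i), (ii) and (iii): part (i) gives $\mathrm{l.w.PGFgl}_{\mathcal{FP}_n\mbox{-}Inj(R^{\op})}(R) < \infty$, part (ii) gives $\mathrm{gl.GPD}_{\mathcal{FP}_n\mbox{-}Flat(R)}(R) < \infty$ for $n > 1$, and part (iii) gives $\mathrm{gl.GID}_{\mathcal{FP}_n\mbox{-}Inj(R)}(R) < \infty$ for $n > 1$. These are precisely the three quantities asserted to be finite in the corollary, under the same restriction $n > 1$. So the corollary is a one-line consequence once the two inputs are in place.

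The only thing that requires a sentence of justification rather than a pure citation is the first hypothesis, and even there the point is just unwinding the definition of Gorenstein $n$-coherence recalled just before Proposition~\ref{Interaccion} (following J.~P.~Wang, Z.~K.~Liu, X.~Y.~Yang \cite{Wang18}); in particular such a ring is two-sided $n$-coherent and has a uniform bound on the flat dimensions of its $\mathcal{FP}_n$-injective modules on both sides. There is no real obstacle here: the corollary is a packaging of the preceding proposition, and the ``hard work'' (the balanced-pair style arguments through \cite[Theorem 3.6]{Alina20}, \cite[Proposition 3.7]{Becerril22-1}, \cite[Corollary 8.7]{Huerta}) has already been carried out in the proof of Proposition~\ref{Globales-iguales}. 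If anything, the subtle point to keep track of is merely the bookkeeping of which conclusions need $n > 1$ and which do not, but since all three displayed quantities in the corollary are stated only for $n > 1$, this causes no difficulty.

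\begin{proof}
Let $R$ be a Gorenstein $n$-coherent ring with $\pd(\Inj(R)) < \infty$. By the definition of a Gorenstein $n$-coherent ring recalled above (see \cite{Wang18}), there is an integer $m \geq 0$ such that $\fd(N) \leq m$ for every $N \in \mathcal{FP}_n\mbox{-}Inj(R^{\op})$; hence $\fd(\mathcal{FP}_n\mbox{-}Inj(R^{\op})) \leq m < \infty$. Together with the hypothesis $\pd(\Inj(R)) < \infty$, this means that the hypotheses of Proposition~\ref{Globales-iguales} are satisfied. Applying parts (i), (ii) and (iii) of that proposition, we obtain that $\mathrm{l.w.PGFgl}_{\mathcal{FP}_n\mbox{-}Inj(R^{\op})}(R) < \infty$, and that $\mathrm{gl.GPD}_{\mathcal{FP}_n\mbox{-}Flat(R)}(R) < \infty$ and $\mathrm{gl.GID}_{\mathcal{FP}_n\mbox{-}Inj(R)}(R) < \infty$ for $n > 1$, which is the assertion.
\end{proof}
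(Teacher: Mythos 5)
Your proposal is correct and matches the paper's (implicit) argument exactly: the corollary is stated without proof precisely because it is the direct specialization of Proposition~\ref{Globales-iguales}, whose first hypothesis $\fd(\mathcal{FP}_n\mbox{-}Inj(R^{\op}))<\infty$ is built into the definition of a Gorenstein $n$-coherent ring and whose second hypothesis is assumed in the corollary. Nothing is missing.
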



 We know that there exists an ascending chain of inclusions 
$$\mathcal{FP}_0 \mbox{-}Inj (R) \subseteq \mathcal{FP}_1\mbox{-}Inj(R) \subseteq \cdots \subseteq \mathcal{FP}_{n} \mbox{-}Inj(R)  \subseteq \cdots \subseteq \mathcal{FP}_{\infty}\mbox{-}Inj (R),$$ 
of classes of $R$-modules \cite[\S 3]{BP}. If $R$ is a fix Gorenstein $n$-coherent ring (in particular if the dimension  $\fd (\mathcal{FP}_{n} \mbox{-}Inj(R) )$ is finite), then for all $k \leq n$ we have that the dimension $\fd (\mathcal{FP}_{k} \mbox{-}Inj(R)  )$ is finite, since $\fd (\mathcal{FP}_{k} \mbox{-}Inj(R)  ) \leq \fd (\mathcal{FP}_{n} \mbox{-}Inj(R)  ) $ for all $k \in \{0,1, \dots , n\}$. With this we have the following.

\begin{cor}
Let  $R$ be a Gorenstein n-coherent ring.  Then for all $k \in \{ 0,1, \dots , n \}$ the left  Gorenstein weak global dimension relative to the class  $\GF _{ \mathcal{FP}_k\mbox{-}Inj (R^{\op})}$  is finite and coincides with $\fd (\mathcal{FP}_k\mbox{-}Inj (R^{\op}))$.
\end{cor}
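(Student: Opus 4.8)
The plan is to reduce this corollary to the already-established Proposition \ref{Interaccion} (the one labeled, somewhat confusingly, as ``\label{Interaccion}'' following the statement about Gorenstein $n$-coherent rings), applied not to the index $n$ but to each index $k \leq n$. First I would observe that a left $n$-coherent ring is automatically left $k$-coherent for every $k \leq n$: indeed, the chain $\mathcal{FP}_0(R) \supseteq \mathcal{FP}_1(R) \supseteq \cdots$ together with the hypothesis $\mathcal{FP}_n(R) \subseteq \mathcal{FP}_{n+1}(R)$ forces $\mathcal{FP}_k(R) \subseteq \mathcal{FP}_n(R) \subseteq \mathcal{FP}_{n+1}(R) \subseteq \mathcal{FP}_{k+1}(R)$ for $k \leq n$, using also the elementary fact that $\mathcal{FP}_{n+1}(R) \subseteq \mathcal{FP}_{k+1}(R)$ when $k+1 \le n+1$. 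The same argument applies on the right side, so $R$ is two-sided $k$-coherent for each $k \in \{0,1,\dots,n\}$.

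Next I would handle the dimension bound in the definition of Gorenstein $n$-coherence. We have a uniform $m$ with $\fd(M) \le m$ for all $M \in \mathcal{FP}_n\mbox{-}Inj(R)$ and $\fd(N) \le m$ for all $N \in \mathcal{FP}_n\mbox{-}Inj(R^{\op})$. Using the ascending chain $\mathcal{FP}_0\mbox{-}Inj(R) \subseteq \cdots \subseteq \mathcal{FP}_n\mbox{-}Inj(R)$ recalled just before the corollary (valid on both sides), for every $k \le n$ the class $\mathcal{FP}_k\mbox{-}Inj(R)$ is contained in $\mathcal{FP}_n\mbox{-}Inj(R)$, hence $\fd(\mathcal{FP}_k\mbox{-}Inj(R)) \le \fd(\mathcal{FP}_n\mbox{-}Inj(R)) \le m$, and likewise on the right. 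Combined with the previous paragraph, this shows that for each fixed $k \in \{0,1,\dots,n\}$ the ring $R$ is itself a Gorenstein $k$-coherent ring.

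Finally I would simply invoke Proposition \ref{Interaccion} with $n$ replaced by $k$: since $R$ is Gorenstein $k$-coherent, the left Gorenstein weak global dimension $\mathrm{l.w.Ggl}_{\mathcal{FP}_k\mbox{-}Inj(R^{\op})}(R)$ relative to the class $\GF_{\mathcal{FP}_k\mbox{-}Inj(R^{\op})}$ is finite and equals $\fd(\mathcal{FP}_k\mbox{-}Inj(R^{\op}))$. As this holds for every $k \in \{0,1,\dots,n\}$, the corollary follows. I do not expect any genuine obstacle here; the only point requiring care is the bookkeeping with the two nested chains of $\mathcal{FP}_k$ classes (one descending, for the coherence condition on finitely presented modules, and one ascending, for the $\mathcal{FP}_k$-injectives) and checking that both the $k$-coherence and the uniform flat-dimension bound are genuinely inherited by each smaller index on both sides of the ring — that is really the whole content of the reduction, and it is routine once the direction of each inclusion is pinned down.
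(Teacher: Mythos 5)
There is a genuine error in your first step. You claim that a left $n$-coherent ring is left $k$-coherent for every $k\le n$, via the inclusion $\mathcal{FP}_k(R)\subseteq\mathcal{FP}_n(R)$. That inclusion is backwards: the chain of classes $\mathcal{FP}_0(R)\supseteq\mathcal{FP}_1(R)\supseteq\cdots$ is \emph{descending}, so for $k\le n$ one has $\mathcal{FP}_k(R)\supseteq\mathcal{FP}_n(R)$, and the chain of inclusions you write down collapses. In fact $n$-coherence propagates \emph{upward}, not downward: an $n$-coherent ring is $m$-coherent for all $m\ge n$, but a coherent ($1$-coherent) ring need not be Noetherian ($0$-coherent), so a Gorenstein $n$-coherent (e.g.\ Ding--Chen) ring need not be Gorenstein $k$-coherent for $k<n$. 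Consequently your final step --- invoking Proposition \ref{Interaccion} verbatim with $n$ replaced by $k$, on the grounds that $R$ is Gorenstein $k$-coherent --- is not available.

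Your second paragraph, however, is exactly the right ingredient and is what the paper actually uses: since the classes of relative injectives form an \emph{ascending} chain, $\mathcal{FP}_k\mbox{-}Inj(R^{\op})\subseteq\mathcal{FP}_n\mbox{-}Inj(R^{\op})$ for $k\le n$, one gets $\fd(\mathcal{FP}_k\mbox{-}Inj(R^{\op}))\le\fd(\mathcal{FP}_n\mbox{-}Inj(R^{\op}))<\infty$, and likewise $\fd(\Inj(R))\le\fd(\mathcal{FP}_n\mbox{-}Inj(R))<\infty$. These two finiteness statements are all that Corollary \ref{caracteriza} (equivalently, Proposition \ref{Ikenaga} together with its converse) requires for $\mathcal{B}=\mathcal{FP}_k\mbox{-}Inj(R^{\op})$; no $k$-coherence of $R$ is needed. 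So the fix is to bypass the coherence transfer entirely and apply Corollary \ref{caracteriza} directly to each class $\mathcal{FP}_k\mbox{-}Inj(R^{\op})$, which is the route the paper takes.
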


Recently has been proved by L. W. Christensen, S. Estrada and P. Thompson \cite{Chris-Estrada} without conditions over the ring $R$,  that the classical Gorenstein weak global dimension is symmetric, i.e. the quantities $\mathrm{l.w.Ggl} _{ \Inj (R^{\op}) } (R) $ and $\mathrm{r.w.Ggl} _{ \Inj (R) } (R^{\op})$ always agree.   Actually we have two classes of $\mathcal{FP}_n$-injective modules, the one in $\Modu (R)$ and the other in $\Modu (R ^{\op})$, denoted $ \mathcal{FP}_{n} \mbox{-}Inj(R) $ and $ \mathcal{FP}_{n} \mbox{-}Inj(R^{\op}) $, resp. By definition we know  that $\fd ( \mathcal{FP}_{n} \mbox{-}Inj(R) ) $ and $\fd ( \mathcal{FP}_{n} \mbox{-}Inj(R^{\op}) )$ are both finite when $R$ is a Gorenstein $n$-coherent ring. The author is not sure if these quantities match. For now we only can only assume it. 

\begin{cor}
Let $R$ be a Gorenstein $n$-coherent ring, if the dimensions $\fd ( \mathcal{FP}_{n} \mbox{-}Inj(R) ) $ and $\fd ( \mathcal{FP}_{n} \mbox{-}Inj(R^{\op}) )$ match, then  Gorenstein weak  global dimensions relative to the classes $ \GF _{\mathcal{FP}_{n} \mbox{-}Inj(R ^{\op})}$ and $ \GF _{\mathcal{FP}_{n} \mbox{-}Inj(R )}$ coincide. This is, the following equality is given.

$$ \mathrm{l.w.Ggl}_{\mathcal{FP}_n\mbox{-}Inj (R^{\op})}(R) = \mathrm{r.w.Ggl}_{ \mathcal{FP}_n\mbox{-}Inj (R)}(R ^{\op}).$$
\end{cor}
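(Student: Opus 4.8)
The plan is to deduce the desired equality of relative Gorenstein weak global dimensions from the corresponding statement for the absolute classes, namely the symmetry theorem of Christensen, Estrada and Thompson \cite[Theorem 2.4]{Chris-Estrada}, combined with the computation already obtained in Corollary above that identifies each relative dimension with a flat dimension. Concretely, by the Corollary preceding this one, $\mathrm{l.w.Ggl}_{\mathcal{FP}_n\mbox{-}Inj (R^{\op})}(R) = \fd(\mathcal{FP}_n\mbox{-}Inj (R^{\op}))$, and by the same argument applied to the opposite ring $R^{\op}$ (which is again Gorenstein $n$-coherent, since $n$-coherence and the finiteness condition on flat dimensions of $\mathcal{FP}_n$-injectives are two-sided), $\mathrm{r.w.Ggl}_{\mathcal{FP}_n\mbox{-}Inj (R)}(R^{\op}) = \fd(\mathcal{FP}_n\mbox{-}Inj (R))$. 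Hence the two relative dimensions are equal precisely when the two flat dimensions $\fd(\mathcal{FP}_n\mbox{-}Inj (R))$ and $\fd(\mathcal{FP}_n\mbox{-}Inj (R^{\op}))$ agree, which is exactly the hypothesis.

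So the first step is simply to invoke the Corollary above for $R$ to get the left-hand equality $\mathrm{l.w.Ggl}_{\mathcal{FP}_n\mbox{-}Inj (R^{\op})}(R) = \fd(\mathcal{FP}_n\mbox{-}Inj (R^{\op}))$. The second step is to observe that $R^{\op}$ is also Gorenstein $n$-coherent — this needs the (routine) remark that the defining conditions are symmetric in $R$ and $R^{\op}$ — and then apply the same Corollary, read over $R^{\op}$, to obtain $\mathrm{r.w.Ggl}_{\mathcal{FP}_n\mbox{-}Inj (R)}(R^{\op}) = \fd(\mathcal{FP}_n\mbox{-}Inj (R))$; note $(R^{\op})^{\op} = R$, so the class of right $\mathcal{FP}_n$-injective $(R^{\op})$-modules is $\mathcal{FP}_n\mbox{-}Inj(R)$. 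The third and final step is to chain these two identities together with the assumed equality $\fd(\mathcal{FP}_n\mbox{-}Inj(R)) = \fd(\mathcal{FP}_n\mbox{-}Inj(R^{\op}))$ to conclude.

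I do not expect a serious obstacle here: all the real work — establishing that the relative Gorenstein weak global dimension coincides with the relevant flat dimension — has already been done in Proposition \ref{Ikenaga}, Corollary \ref{caracteriza}, and the Corollary just above. The only point requiring a moment's care is making sure the appeal to the previous Corollary over $R^{\op}$ is legitimate, i.e. confirming that "Gorenstein $n$-coherent" is a left-right symmetric notion (it is, by its very definition, which quantifies over both $\mathcal{FP}_n\mbox{-}Inj(R)$ and $\mathcal{FP}_n\mbox{-}Inj(R^{\op})$) and that the relative Gorenstein weak global dimension is defined in exactly the same way for modules over $R^{\op}$. Granting that, the proof is a two-line chain of equalities.

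\begin{proof}
By the previous corollary applied to $R$ we have
$$\mathrm{l.w.Ggl}_{\mathcal{FP}_n\mbox{-}Inj (R^{\op})}(R) = \fd(\mathcal{FP}_n\mbox{-}Inj (R^{\op})).$$
The notion of Gorenstein $n$-coherent ring is left-right symmetric, so $R^{\op}$ is again Gorenstein $n$-coherent; applying the same corollary to $R^{\op}$ and using $(R^{\op})^{\op} = R$ we obtain
$$\mathrm{r.w.Ggl}_{\mathcal{FP}_n\mbox{-}Inj (R)}(R^{\op}) = \fd(\mathcal{FP}_n\mbox{-}Inj (R)).$$
By hypothesis $\fd(\mathcal{FP}_n\mbox{-}Inj (R)) = \fd(\mathcal{FP}_n\mbox{-}Inj (R^{\op}))$, so combining the three equalities yields
$$\mathrm{l.w.Ggl}_{\mathcal{FP}_n\mbox{-}Inj (R^{\op})}(R) = \mathrm{r.w.Ggl}_{\mathcal{FP}_n\mbox{-}Inj (R)}(R^{\op}),$$
as claimed.
\end{proof}
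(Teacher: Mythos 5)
Your proposal is correct and matches the argument the paper intends: the corollary is a direct consequence of the preceding corollary (identifying each relative Gorenstein weak global dimension with the corresponding flat dimension of the $\mathcal{FP}_n$-injectives), applied to both $R$ and $R^{\op}$ — the latter being legitimate since Gorenstein $n$-coherence is two-sided by definition — together with the assumed equality of the two flat dimensions. Nothing further is needed.
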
 

As we have seen in Proposition \ref{Interaccion}, there is a close interaction between the classes of relative Gorenstein $R$-modules   coming from the class $\mathcal{FP}_n$.  This behavior has been noted by J. \v{S}aroch and J. \v{S}t'ov\'ich\v{e}k \cite{Stovi}, where by making use of such an interaction it is further proved that every ring is \textit{GF-closed}.    The mentioned interaction has been of interest to other authors. S. Estrada, A. Iacob and M. A. P\'erez  went further in this direction \cite{Estrada20}, showing in particular that  \cite[Theorem 4.11]{Stovi} has a wider scope as is shown  \cite[Theorem 4.2]{Estrada20}, we set out one version of the latter result  as follows.

\begin{pro} \label{SequencesStovi-0}
Let $M $ be a left $R$-module and $1 < n \leq \infty$. The following statements are equivalent. 
\begin{itemize}
\item[(i)] $M \in \GF _{\mathcal{FP}_n\mbox{-}Inj (R^{\op})}$.
\item[(ii)] There is an exact sequence 
$$0 \to K \to L \to M \to 0$$  
 with $K \in \Flat (R)$ and $L \in \Proj \GF _{\mathcal{FP}_n\mbox{-}Inj (R^{\op})} = \GP _{\mathcal{FP}_n\mbox{-}Flat (R)}$, which is $\Hom _{R} (-, \C (R))$-acyclic \footnote{$C$ is a cotorsion $R$-module if $\Ext ^1 _R (\Flat(R), C) =0$, equivalently $\Ext ^{i}_R (\Flat (R), C) =0$ for all $i>0$. Here $\mathcal{C}(R)$ denotes the collection of all these.}.
\item[(iii)] $\Ext ^1 _R (M, C) =0$ for all $C \in \C (R) \cap (\Proj \GF _{\mathcal{FP}_n\mbox{-}Inj (R^{\op})}) ^{\ortogonal}$.
\item[(iv)] There is an exact sequence 
$$0 \to M \to F \to N \to 0,$$
 with $F \in \Flat$ and $N \in \Proj \GF _{\mathcal{FP}_n\mbox{-}Inj (R^{\op})} = \GP _{\mathcal{FP}_n\mbox{-}Flat (R)}$. In particular, we have the equality 
 $$\GF _{\mathcal{FP}_n\mbox{-}Inj (R^{\op})} (R) \cap (\GP _{\mathcal{FP}_n\mbox{-}Flat (R)}) ^{\ortogonal} = \Flat (R).$$ 
\end{itemize}
\end{pro}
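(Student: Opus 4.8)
The plan is to prove the cycle of implications $\mathrm{(i)} \Rightarrow \mathrm{(ii)} \Rightarrow \mathrm{(iii)} \Rightarrow \mathrm{(iv)} \Rightarrow \mathrm{(i)}$, leaning on the known equality $P\GF_{\mathcal{FP}_n\mbox{-}Inj(R^{\op})} = \GP_{\mathcal{FP}_n\mbox{-}Flat(R)}$ from \cite[Theorem 3.6]{Alina20} for $n>1$ (already invoked in Proposition \ref{Globales-iguales}(ii)), on the fact that $\GF_{\mathcal{FP}_n\mbox{-}Inj(R^{\op})}$ is closed under extensions (Remark \ref{Remark1}(1)), and on the completeness of the cotorsion pair $(\Flat(R), \C(R))$.

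\noindent\textbf{First, $\mathrm{(i)} \Rightarrow \mathrm{(ii)}$.} Given $M \in \GF_{\mathcal{FP}_n\mbox{-}Inj(R^{\op})}$, take the acyclic complex of flats $\cdots \to F_1 \to F_0 \to F^0 \to F^1 \to \cdots$ with $M = \Ker(F^0 \to F^1)$ witnessing membership, and consider the left half $\cdots \to F_1 \to F_0 \to M \to 0$. Using the complete cotorsion pair $(\Flat(R), \C(R))$ one can apply a standard "resolution-by-the-complex" or explicit pullback/pushout surgery (as in \cite[Theorem 4.2]{Estrada20} or \cite[Section 3]{Stovi}) to replace this flat resolution by a short exact sequence $0 \to K \to L \to M \to 0$ with $K \in \Flat(R)$, $L$ a projective-coresolved Gorenstein $\mathcal{FP}_n$-flat module, and the sequence $\Hom_R(-,\C(R))$-acyclic; the $\Hom_R(-,\C(R))$-acyclicity comes from $K$ being flat together with the vanishing $\Ext^{\geq 1}_R(\text{flat}, C)=0$. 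The main technical labor lies here and I would cite \cite[Theorem 4.2]{Estrada20} rather than redo the surgery.

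\noindent\textbf{Next, $\mathrm{(ii)} \Rightarrow \mathrm{(iii)}$ and $\mathrm{(iii)} \Rightarrow \mathrm{(iv)}$.} For $\mathrm{(ii)} \Rightarrow \mathrm{(iii)}$: take $C \in \C(R) \cap (P\GF_{\mathcal{FP}_n\mbox{-}Inj(R^{\op})})^{\ortogonal}$ and apply $\Hom_R(-,C)$ to $0 \to K \to L \to M \to 0$; since $L \in P\GF_{\mathcal{FP}_n\mbox{-}Inj(R^{\op})}$ we get $\Ext^1_R(L,C)=\Ext^2_R(L,C)=0$, and the $\Hom_R(-,\C(R))$-acyclicity forces $\Ext^1_R(M,C)=0$ via the long exact sequence. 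For $\mathrm{(iii)} \Rightarrow \mathrm{(iv)}$: use the special $\mathcal{FP}_n$-preenvelope / completeness of the relevant cotorsion pair to embed $M \hookrightarrow F$ with $F$ flat and cokernel $N$ in the left-orthogonal-type class; one checks $N \in P\GF_{\mathcal{FP}_n\mbox{-}Inj(R^{\op})}$ by verifying $\Ext^1_R(N,C)=0$ for the appropriate cotorsion modules $C$, which follows from $\mathrm{(iii)}$ applied to $M$ together with $F$ being flat (hence $\Ext^{\geq 1}_R(F,C)=0$ for $C$ cotorsion).

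\noindent\textbf{Finally, $\mathrm{(iv)} \Rightarrow \mathrm{(i)}$ and the displayed equality.} From $0 \to M \to F \to N \to 0$ with $F \in \Flat(R) \subseteq \GF_{\mathcal{FP}_n\mbox{-}Inj(R^{\op})}$ and $N \in P\GF_{\mathcal{FP}_n\mbox{-}Inj(R^{\op})} \subseteq \GF_{\mathcal{FP}_n\mbox{-}Inj(R^{\op})}$, one concludes $M \in \GF_{\mathcal{FP}_n\mbox{-}Inj(R^{\op})}$ because this class is resolving — in particular closed under kernels of epimorphisms between its objects (combine \cite[Theorem 2.10]{Estrada20} with the closure under extensions from Remark \ref{Remark1}(1), or quote \cite[Theorem 2.7]{ZWang} as in the proof of Proposition \ref{WeakCoresolved}). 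For the equality $\GF_{\mathcal{FP}_n\mbox{-}Inj(R^{\op})}(R) \cap (\GP_{\mathcal{FP}_n\mbox{-}Flat(R)})^{\ortogonal} = \Flat(R)$: the inclusion $\supseteq$ is clear since every flat module is Gorenstein $\mathcal{FP}_n$-flat and lies in $(\GP_{\mathcal{FP}_n\mbox{-}Flat(R)})^{\ortogonal}$ by the defining $\Hom$-acyclicity property of Gorenstein-projective-type modules against flats/cotorsion; for $\subseteq$, if $M$ lies in the intersection then $\mathrm{(iv)}$ gives $0 \to M \to F \to N \to 0$, but $M \in (\GP_{\mathcal{FP}_n\mbox{-}Flat(R)})^{\ortogonal}$ means $\Ext^1_R(N,M)=0$, so the sequence splits and $M$ is a direct summand of the flat module $F$, hence flat. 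The main obstacle is ensuring the surgery in $\mathrm{(i)} \Rightarrow \mathrm{(ii)}$ genuinely produces an $L$ that is \emph{projective}-coresolved rather than merely flat-coresolved; this is exactly where \cite[Theorem 3.6]{Alina20} (identifying $P\GF$ with $\GP_{\mathcal{FP}_n\mbox{-}Flat(R)}$ for $n>1$) and the hypothesis $n>1$ become essential, and I would handle it by citing \cite[Theorem 4.2]{Estrada20} directly.
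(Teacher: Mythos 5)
Your overall strategy ultimately coincides with the paper's: the paper proves this proposition simply by invoking the general result \cite[Theorem 2.14]{Estrada20} for (semi-)definable classes $\mathcal{B}$, and you likewise defer the main technical labor to \cite[Theorem 4.2]{Estrada20}, which is precisely the result this proposition restates. Your arguments for $\mathrm{(ii)}\Rightarrow\mathrm{(iii)}$, for $\mathrm{(iv)}\Rightarrow\mathrm{(i)}$, and for the equality $\GF_{\mathcal{FP}_n\mbox{-}Inj(R^{\op})}\cap(\GP_{\mathcal{FP}_n\mbox{-}Flat(R)})^{\ortogonal}=\Flat(R)$ are correct (the splitting argument for the last one is the same one the paper reuses in Proposition \ref{GP-ortogonal}). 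What you omit is the one verification the paper actually performs: that $\mathcal{FP}_n\mbox{-}Inj(R^{\op})$ is definable for $1<n<\infty$ and semi-definable for $n=\infty$ (\cite[Example 2.21 (3)]{Estrada20}); this is the hypothesis that licenses the citation and, together with $P\GF_{\mathcal{FP}_n\mbox{-}Inj(R^{\op})}=\GP_{\mathcal{FP}_n\mbox{-}Flat(R)}$ from \cite[Theorem 3.6]{Alina20}, constitutes the entire content of the paper's proof.

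The genuine gap is in your sketch of $\mathrm{(iii)}\Rightarrow\mathrm{(iv)}$. Completeness of the cotorsion pair $(\Flat(R),\C(R))$ yields a sequence $0\to M\to C\to F\to 0$ with $C$ cotorsion and $F$ flat; it does \emph{not} embed $M$ into a flat module, so the asserted monomorphism $M\hookrightarrow F$ with $F$ flat cannot be produced this way. Moreover, to certify that the cokernel $N$ lies in $P\GF_{\mathcal{FP}_n\mbox{-}Inj(R^{\op})}$ via the cotorsion pair $(P\GF_{\mathcal{FP}_n\mbox{-}Inj(R^{\op})},\,P\GF_{\mathcal{FP}_n\mbox{-}Inj(R^{\op})}^{\ortogonal})$ of \cite[Theorem 2.13]{Estrada20}, you would need $\Ext^1_R(N,-)$ to vanish on \emph{all} of $(P\GF_{\mathcal{FP}_n\mbox{-}Inj(R^{\op})})^{\ortogonal}$, whereas hypothesis $\mathrm{(iii)}$ only controls $\Ext^1_R(M,-)$ on the smaller class $\C(R)\cap(P\GF_{\mathcal{FP}_n\mbox{-}Inj(R^{\op})})^{\ortogonal}$. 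Bridging that difference is exactly the hard part of \cite[Theorem 4.11]{Stovi} and \cite[Theorem 2.14]{Estrada20} and is not dispatched by a generic preenvelope argument. So either cite the general theorem for the whole equivalence, as the paper does, or reproduce the \v{S}aroch--\v{S}t'ov\'ich\v{e}k surgery in full; the intermediate position you take leaves this implication unproved.
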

\begin{proof}
It follows directly from   \cite[Theorem 2.14]{Estrada20}, since the class $\mathcal{FP}_n\mbox{-}Inj (R^{\op})$ is definable for all $\infty > n > 1$ (so semi-definable also) \cite[Example 2.21 (3)]{Estrada20} and for $n = \infty$ the class $\mathcal{FP}_{\infty}\mbox{-}Inj (R)$ is semi-definable \cite[Example 2.21 (3)]{Estrada20}. While the equality $\Proj \GF _{\mathcal{FP}_n\mbox{-}Inj (R^{\op})} = \GP _{\mathcal{FP}_n\mbox{-}Flat (R)}$ is true for all $n >1 $ by \cite[Theorem 3.6]{Alina20}. 
\end{proof}

We will now see that the above result can be stated for $R$-modules with finite  Gorenstein $\mathcal{FP}_n$-flat dimension. Although we only state part of the equivalence, the rest of the theorem can be taken to such generality. 

\begin{teo} \label{SequencesStovi}
Let $m$ be a non-negative integer and $1< n \leq \infty$. The following statements are equivalent.
\begin{itemize}
\item[(i)] $\Gfd _{\mathcal{FP}_n\mbox{-}Inj (R^{\op})} (M) \leq m$,
\item[(ii)] There is an exact sequence 
$$0 \to M \to F \to N \to 0,$$
 with $\fd (F) \leq m$ and $N \in \GP _{\mathcal{FP}_n\mbox{-}Flat (R)}$.
\end{itemize}
 \end{teo}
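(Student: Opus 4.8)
The plan is to prove the two implications separately, using the already-established relative version of \v{S}aroch--\v{S}t'ov\'i\v{c}ek (Proposition \ref{SequencesStovi-0}) as the base case $m=0$ and then inducting on $m$. For $\mathrm{(i)} \Rightarrow \mathrm{(ii)}$, suppose $\Gfd _{\mathcal{FP}_n\mbox{-}Inj (R^{\op})}(M) \leq m$. When $m = 0$ this is exactly $\mathrm{(i)} \Leftrightarrow \mathrm{(iv)}$ of Proposition \ref{SequencesStovi-0}, so assume $m > 0$. Pick a short exact sequence $0 \to K \to G \to M \to 0$ with $G \in \GF _{\mathcal{FP}_n\mbox{-}Inj (R^{\op})}$ and $\Gfd _{\mathcal{FP}_n\mbox{-}Inj (R^{\op})}(K) \leq m-1$ (this uses that $\GF _{\mathcal{FP}_n\mbox{-}Inj (R^{\op})}$ is resolving with the expected behaviour on resolution dimension, via Remark \ref{Remark1} (1) and the theory of \cite{Estrada20, Becerril22}). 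By the base case applied to $G$, there is an exact sequence $0 \to G \to F_0 \to N_0 \to 0$ with $F_0 \in \Flat(R)$ and $N_0 \in \GP _{\mathcal{FP}_n\mbox{-}Flat (R)}$; by induction applied to $K$, there is $0 \to K \to F' \to N' \to 0$ with $\fd(F') \leq m-1$ and $N' \in \GP _{\mathcal{FP}_n\mbox{-}Flat (R)}$. The standard pushout/pullback manipulation along $K \hookrightarrow G$ then produces an exact sequence $0 \to M \to F \to N \to 0$ in which $F$ is an extension of $F_0$ by (a shift of) $F'$, so $\fd(F) \leq m$, and $N$ is an extension with both ends in $\GP _{\mathcal{FP}_n\mbox{-}Flat (R)}$, hence $N \in \GP _{\mathcal{FP}_n\mbox{-}Flat (R)}$ since that class is closed under extensions.

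For $\mathrm{(ii)} \Rightarrow \mathrm{(i)}$, given $0 \to M \to F \to N \to 0$ with $\fd(F) \leq m$ and $N \in \GP _{\mathcal{FP}_n\mbox{-}Flat (R)} = P\GF _{\mathcal{FP}_n\mbox{-}Inj (R^{\op})} \subseteq \GF _{\mathcal{FP}_n\mbox{-}Inj (R^{\op})}$, I want to read off $\Gfd _{\mathcal{FP}_n\mbox{-}Inj (R^{\op})}(M) \leq m$. Since $\Flat(R) \subseteq \GF _{\mathcal{FP}_n\mbox{-}Inj (R^{\op})}$, the module $F$ has $\Gfd _{\mathcal{FP}_n\mbox{-}Inj (R^{\op})}(F) = \fd(F) \leq m$ (flat dimension dominates the relative Gorenstein flat dimension, and they agree when the latter is finite — cf. the behaviour in \cite[Lemma 2.8]{Estrada20} and \cite{Becerril22}). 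Then the short exact sequence $0 \to M \to F \to N \to 0$ with $N$ relatively Gorenstein flat, together with the "resolving/coresolving" dimension shifting for $\GF _{\mathcal{FP}_n\mbox{-}Inj (R^{\op})}$, gives $\Gfd _{\mathcal{FP}_n\mbox{-}Inj (R^{\op})}(M) \leq \max\{\Gfd _{\mathcal{FP}_n\mbox{-}Inj (R^{\op})}(F), \Gfd _{\mathcal{FP}_n\mbox{-}Inj (R^{\op})}(N) + 1\} \leq m$, where I use that $N$ being in the class means it contributes $0$ after one step. Here one must be slightly careful: the clean bound needs the Gorenstein flat dimension to be computed correctly across a short exact sequence whose last term is in the class, which is the relative analogue of Holm's results and follows from the machinery cited in the excerpt.

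The main obstacle I expect is the bookkeeping in $\mathrm{(i)} \Rightarrow \mathrm{(ii)}$: controlling simultaneously the flat dimension of the middle term $F$ and membership of $N$ in $\GP _{\mathcal{FP}_n\mbox{-}Flat (R)}$ through the pushout construction, and in particular verifying that the $\Hom_R(-,\C(R))$-acyclicity (or the $\Ext^1$-vanishing condition (iii)) present in the base-case sequence is not actually needed for the finite-dimensional statement, or else carrying it along the induction. A secondary point to handle with care is that the equality $P\GF _{\mathcal{FP}_n\mbox{-}Inj (R^{\op})} = \GP _{\mathcal{FP}_n\mbox{-}Flat (R)}$ and the closure properties (extensions, resolving) are only available for $n > 1$ and for $n = \infty$ in the semi-definable setting, so the statement's hypothesis $1 < n \leq \infty$ is exactly what makes the cited inputs applicable; I would make sure each invocation of \cite{Alina20, Estrada20} respects that range.
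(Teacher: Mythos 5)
Your overall strategy (induction on $m$ with Proposition \ref{SequencesStovi-0} as the base case, plus pushout diagrams) is the right one, and your direction (ii) $\Rightarrow$ (i) is essentially sound, though it takes a more direct route than the paper: note that the dimension-shifting inequality you want for the kernel term is $\Gfd_{\mathcal{FP}_n\mbox{-}Inj(R^{\op})}(M) \leq \max\{\Gfd_{\mathcal{FP}_n\mbox{-}Inj(R^{\op})}(F), \Gfd_{\mathcal{FP}_n\mbox{-}Inj(R^{\op})}(N)-1\}$, not ``$\Gfd(N)+1$'', and it relies on $\GF_{\mathcal{FP}_n\mbox{-}Inj(R^{\op})}$ being resolving and closed under summands (available for $n>1$); the paper instead resolves $F$ via the complete cotorsion pair $(\GP_{\mathcal{FP}_n\mbox{-}Flat(R)}, \GP_{\mathcal{FP}_n\mbox{-}Flat(R)}^{\ortogonal})$ and takes a pullback, which needs comparable input.

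The genuine gap is in (i) $\Rightarrow$ (ii). You apply the base case to $G$ to get $0 \to G \to F_0 \to N_0 \to 0$ and then assert that a ``standard pushout/pullback manipulation'' of your three sequences yields the desired $0 \to M \to F \to N \to 0$. It does not: composing $K \hookrightarrow G \hookrightarrow F_0$ gives $0 \to M \to F_0/K \to N_0 \to 0$, and every natural way of bringing in the induction sequence $0 \to K \to F' \to N' \to 0$ either leaves the middle term $F_0/K$, whose flat dimension is controlled only by $\fd(K)+1$ (and $\fd(K)$ is not finite in general --- you only know $\Gfd_{\mathcal{FP}_n\mbox{-}Inj(R^{\op})}(K)\leq m-1$), or produces an extension of $N' \in \GP_{\mathcal{FP}_n\mbox{-}Flat(R)}$ by a flat module, which has infinite flat dimension in general. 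The correct order of operations is: first push out $0 \to K \to G \to M \to 0$ along $K \to F'$ to obtain $0 \to F' \to P \to M \to 0$ together with $0 \to G \to P \to N' \to 0$; observe that $P \in \GF_{\mathcal{FP}_n\mbox{-}Inj(R^{\op})}$ by closure under extensions; only then apply Proposition \ref{SequencesStovi-0}(iv) to $P$ (not to $G$) to get $0 \to P \to L \to N \to 0$ with $L$ flat and $N \in \GP_{\mathcal{FP}_n\mbox{-}Flat(R)}$; a second pushout along $P \hookrightarrow L$ gives $0 \to F' \to L \to F \to 0$ and $0 \to M \to F \to N \to 0$, whence $\fd(F) \leq \fd(F')+1 \leq m$. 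In particular $N$ arises from a single application of the base case --- it is not an extension of $N_0$ by $N'$ as your sketch predicts, which is a symptom that the combination you had in mind does not close up.
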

 
 \begin{proof}
(i) $\Rightarrow$ (ii). We will proceed by induction on $m$. By Proposition \ref{SequencesStovi-0} the case $m = 0 $ is true. Let $m >0 $. By Lemma \ref{Estable}  we can consider an exact sequence $0 \to K \to H \to M \to 0$ with $H \in \Flat (R)$ and $\Gfd _{\mathcal{FP}_n\mbox{-}Inj (R^{\op})} (K) \leq m-1$, then by induction there is an exact sequence $0 \to K \to H' \to G \to 0$ such that $\fd (H' ) \leq m-1$ and $G \in \GP _{\mathcal{FP}_n\mbox{-}Flat (R)}$. We can construct the following p.o. diagram 
 
 $$\xymatrix{ 
  K_{ }  \ar@{^{(}->}[d]  \ar@{^{(}->}[r] & H _{} \ar@{^{(}->}[d]  \ar@{>>}[r] & M   \ar@{=}[d]  \\
  H'  \ar@{>>}[d] \ar@{^{(}->}[r] & H'' \ar@{>>}[r]\ar@{>>}[d] & M   \\
 G \ar@{=} [r]& G .& 
}$$
Since $H\in \Flat (R)$ and $G \in \GP _{\mathcal{FP}_n\mbox{-}Flat (R)} = \Proj \GF _{\mathcal{FP}_n\mbox{-}Inj (R^{\op})} \subseteq \GF _{\mathcal{FP}_n\mbox{-}Inj (R^{\op})}$  from the exact sequence $0 \to H \to H'' \to G \to 0$ we have that $H'' \in \GF _{\mathcal{FP}_n\mbox{-} Inj (R ^{\op})}$. By Proposition \ref{SequencesStovi-0} there is an exact sequence $ 0 \to H'' \to L \to N \to 0$ with $L \in \Flat (R)$ and $N \in \GP_{\mathcal{FP}_n\mbox{-} Flat (R)}$. Now we construct the following p.o. diagram 
 $$\xymatrix{ 
  H' { }  \ar@{=}[d]  \ar@{^{(}->}[r] & H''  \ar@{^{(}->}[d]  \ar@{>>}[r] & M _{}  \ar@{^{(}->}[d]  \\
  H'   \ar@{^{(}->}[r] & L \ar@{>>}[r] \ar@{>>}[d] & F \ar@{>>}[d]  \\
 & N \ar@{=} [r] &  N.
}$$
In the exact sequence $0 \to H' \to L \to F \to 0$, we have that $L \in \Flat (R) $ and $\fd (H') \leq m-1$, this implies that $\fd (F) \leq m$.  Therefore  $0 \to M \to F \to N \to 0$ is the desired sequence. 

(ii) $\Rightarrow$ (i). Suppose that there is an exact sequence $0 \to M \to F \to N \to 0$ with $\fd (F) \leq m$ and $N \in \GP _{\mathcal{FP}_n\mbox{-} Flat (R)}$. Since $\mathcal{FP}_n\mbox{-} Inj (R ^{\op})$ is a definable class, then by \cite[Theorem 2.13]{Estrada20} the pair $(\Proj \GF _{\mathcal{FP}_n\mbox{-} Inj (R^{\op})}, \Proj \GF _{\mathcal{FP}_n\mbox{-} Inj (R^{\op})}  ^{\ortogonal })$ is an hereditary and complete cotorsion pair. Furthermore using that $P \GF _{\mathcal{FP}_n\mbox{-} Inj (R^{\op})} = \GP _{\mathcal{FP}_n\mbox{-} Flat (R)}$ we have that for $F$ there is an exact sequence 
$$ \gamma : 0 \to E \to L \to F \to 0$$
 with $L \in \GP _{\mathcal{FP}_n\mbox{-} Flat (R)}$ and $ E \in \GP_{\mathcal{FP}_n\mbox{-} Flat (R)} ^{\ortogonal}$. Since $\Gfd _{\mathcal{FP}_n\mbox{-} Inj (R^{\op})} (F) \leq \fd (F) \leq m$ and $L \in \GP _{\mathcal{FP}_n\mbox{-} Flat (R )} \subseteq \GF _{\mathcal{FP}_n\mbox{-} Inj (R^{\op})}$ we have from  $\gamma $ that  $\Gfd _{\mathcal{FP}_n\mbox{-} Inj (R^{\op})} (E) \leq m-1$. Consider the following p.b. diagram
 $$\xymatrix{ 
  E_{}   \ar@{^{(}->}[d]  \ar@{=}[r] & E_{}  \ar@{^{(}->}[d]   &   \\
  Q  ^{} \ar@{>>}[d]  \ar@{^{(}->}[r] & L \ar@{>>}[r] \ar@{>>}[d] & N \ar@{=}[d]  \\
   M ^{}  \ar@{^{(}->}[r]  & F \ar@{>>} [r] &  N.
}$$
Now from the exact sequence $0 \to Q \to L \to N \to 0$ we know that $N, L \in \GP _{\mathcal{FP}_n\mbox{-} Flat (R)}$. From \cite[Corollary 3.33]{BMS} and \ref{Remark1} the class $\GP _{\mathcal{FP}_n\mbox{-} Flat (R)}$ is closed by kernels of epimorphisms, thus  $Q \in \GP _{\mathcal{FP}_n\mbox{-} Flat (R)}$. Finally from the exact sequence $0 \to E \to Q \to M \to 0$, with $Q \in \GP _{\mathcal{FP}_n\mbox{-} Flat (R)} \subseteq \GF _{\mathcal{FP}_n\mbox{-} Inj (R^{\op})}$ and $\Gfd _{\mathcal{FP}_n\mbox{-} Inj (R^{\op})} (E) \leq m-1$ we conclude that $\Gfd _{\mathcal{FP}_n\mbox{-} Inj (R^{\op})} (M) \leq m$. 
 \end{proof}
 
  We are interested in studying orthogonal classes $\GP _{\mathcal{FP}_n\mbox{-} Flat (R)} ^{\ortogonal}$ and $^{\ortogonal} \GI _{\mathcal{FP}_n\mbox{-} Inj (R)}$. Actually we know that the pair $(^{\ortogonal} \GI _{\mathcal{FP}_n\mbox{-} Inj (R)} ,   \GI _{\mathcal{FP}_n\mbox{-} Inj (R)})$ is a hereditary cotorsion pair \cite[Proposition 24]{Gill22}, and if $n>2$ is also a perfect cotorsion pair. While that if $n> 2$ the pair $(\GP _{\mathcal{FP}_n\mbox{-} Flat(R) }, \GP _{\mathcal{FP}_n\mbox{-} Flat (R)} ^{\ortogonal} )$ is a hereditary and complete cotorsion pair \cite[Theorem 3.8]{Alina20}. In what follows, we will look at conditions where the orthogonal classes  $\GP _{\mathcal{FP}_n\mbox{-} Flat (R)} ^{\ortogonal}$ and $^{\ortogonal} \GI _{\mathcal{FP}_n\mbox{-} Inj (R)}$, are the same. Which will give us a \textit{cotorsion triple} \cite{Chen}, and possibly a more extensive notion of \textit{Virtually Gorenstein Ring} \cite[Definition 3.9]{Iranies}.
 
 \begin{pro} \label{GP-ortogonal}
 For   $1 < n \leq \infty$, the containment $\Flat (R) ^{\gorro}  \subseteq \GP _{\mathcal{FP}_n\mbox{-} Flat (R)} ^{\ortogonal}$   is always true. 
 Furthermore, if $\GF ^{\gorro} _{\mathcal{FP}_n\mbox{-} Inj (R ^{\op})} = \Modu (R)$, then the following equality is true
 $$\GP _{\mathcal{FP}_n\mbox{-} Flat (R)} ^{\ortogonal} = \Flat (R) ^{\gorro}.$$
 \end{pro}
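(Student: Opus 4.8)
The plan is to prove the containment and the equality separately: the containment is a routine dimension-shift, and the equality reduces — under the standing hypothesis — to a splitting argument supported by Theorem~\ref{SequencesStovi}.

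First I would establish $\Flat(R)^{\gorro}\subseteq\GP_{\mathcal{FP}_n\mbox{-}Flat(R)}^{\ortogonal}$. The initial observation is that every flat left $R$-module is $\mathcal{FP}_n$-flat, so $\Flat(R)\subseteq\mathcal{FP}_n\mbox{-}Flat(R)$. Since $(\Proj(R),\mathcal{FP}_n\mbox{-}Flat(R))$ is a GP-admissible pair (Remark~\ref{Remark1}~(2)) and, for $1<n\leq\infty$, $\GP_{\mathcal{FP}_n\mbox{-}Flat(R)}=P\GF_{\mathcal{FP}_n\mbox{-}Inj(R^{\op})}$ (as recalled in Proposition~\ref{SequencesStovi-0}), a module $G$ in this class is a cycle of an exact complex of projectives that stays $\Hom_R(-,W)$-acyclic for every $W\in\mathcal{FP}_n\mbox{-}Flat(R)$; truncating such a complex and using that all syzygies of $G$ again lie in $\GP_{\mathcal{FP}_n\mbox{-}Flat(R)}$ yields $\Ext^i_R(G,W)=0$ for all $i>0$ and all $W\in\mathcal{FP}_n\mbox{-}Flat(R)$, hence in particular $\GP_{\mathcal{FP}_n\mbox{-}Flat(R)}\ortogonal\Flat(R)$. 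Then, for an arbitrary $M\in\Flat(R)^{\gorro}$ with a finite flat resolution $0\to F_k\to\cdots\to F_0\to M\to 0$, a standard dimension-shift along the associated short exact sequences gives $\Ext^i_R(G,M)=0$ for all $i>0$ and all $G\in\GP_{\mathcal{FP}_n\mbox{-}Flat(R)}$, i.e. $M\in\GP_{\mathcal{FP}_n\mbox{-}Flat(R)}^{\ortogonal}$.

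For the equality under the hypothesis $\GF^{\gorro}_{\mathcal{FP}_n\mbox{-}Inj(R^{\op})}=\Modu(R)$, the inclusion $\supseteq$ is exactly the previous step, so it remains to prove $\GP_{\mathcal{FP}_n\mbox{-}Flat(R)}^{\ortogonal}\subseteq\Flat(R)^{\gorro}$. Given $X\in\GP_{\mathcal{FP}_n\mbox{-}Flat(R)}^{\ortogonal}$, the hypothesis gives $m:=\Gfd_{\mathcal{FP}_n\mbox{-}Inj(R^{\op})}(X)<\infty$, so Theorem~\ref{SequencesStovi} provides an exact sequence $0\to X\to F\to N\to 0$ with $\fd(F)\leq m$ and $N\in\GP_{\mathcal{FP}_n\mbox{-}Flat(R)}$. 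Since $X$ lies in $\GP_{\mathcal{FP}_n\mbox{-}Flat(R)}^{\ortogonal}$, we have $\Ext^1_R(N,X)=0$, so this sequence splits and $X$ is a direct summand of $F$; as flat dimension is inherited by direct summands, $\fd(X)\leq\fd(F)\leq m<\infty$, that is $X\in\Flat(R)^{\gorro}$, and the two classes agree.

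I do not expect a genuine obstacle, as the essential work is concentrated in Theorem~\ref{SequencesStovi} and in the defining $\Hom$-acyclicity of $\GP_{\mathcal{FP}_n\mbox{-}Flat(R)}$. The point requiring the most care is the claim that $\GP_{\mathcal{FP}_n\mbox{-}Flat(R)}$ is left-orthogonal to \emph{all} of $\mathcal{FP}_n\mbox{-}Flat(R)$ and not merely to the projectives: this is precisely what makes the target of the containment $\Flat(R)^{\gorro}$ rather than $\Proj(R)^{\gorro}$, and it must be read off from the GP-admissibility of $(\Proj(R),\mathcal{FP}_n\mbox{-}Flat(R))$ together with the identification $\GP_{\mathcal{FP}_n\mbox{-}Flat(R)}=P\GF_{\mathcal{FP}_n\mbox{-}Inj(R^{\op})}$ available for $1<n\leq\infty$.
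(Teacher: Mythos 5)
Your proposal is correct and follows essentially the same route as the paper: the containment is obtained by establishing $\Ext^i_R(G,F)=0$ for $G\in\GP_{\mathcal{FP}_n\mbox{-}Flat(R)}=P\GF_{\mathcal{FP}_n\mbox{-}Inj(R^{\op})}$ and $F$ flat (the paper cites \cite[Theorem 2.9 (2)]{Estrada20} for this base case, you read it off the defining $\Hom$-acyclicity, which amounts to the same thing) followed by dimension shifting, and the reverse inclusion uses exactly the splitting of the sequence furnished by Theorem \ref{SequencesStovi}. No gaps.
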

 
 \begin{proof}
 Let $F \in \Flat (R) ^{\gorro}$ and $G \in \GP _{\mathcal{FP}_n\mbox{-} Flat (R)} = \Proj \GF _{\mathcal{FP}_n\mbox{-} Inj (R^{\op})}$. From \cite[Theorem 2.9 (2)]{Estrada20} we know that $\Flat (R) \subseteq \Proj \GF ^{\ortogonal} _{\mathcal{FP}_n\mbox{-} Inj (R^{\op})}$, thus, by shift dimension on $F \in \Flat (R) ^{\gorro}$ we get $\Ext ^{i} _R (G, F ) =0$ for all $i > 0 $. Therefore $\Flat (R) ^{\gorro} \subseteq \GP _{\mathcal{FP}_n\mbox{-} Flat (R)} ^{\ortogonal}$.
 
 For other side assume that $\GF ^{\gorro} _{\mathcal{FP}_n\mbox{-} Inj (R^{\op})} = \Modu (R)$ and let $M \in \GP _{\mathcal{FP}_n\mbox{-} Flat (R)} ^{\ortogonal}$. In particular $\Gfd _{\mathcal{FP}_n\mbox{-} Inj (R^{\op})} (M) < \infty$, this implies from Theorem \ref{SequencesStovi} that there is an exact sequence $ 0 \to M \to F \to N \to 0$ with $F \in \Flat  (R)^{\gorro}$ and $N \in  \GP_{\mathcal{FP}_n\mbox{-} Flat (R)}$ which splits. This implies that $M \in \Flat (R) ^{\gorro}$.
 \end{proof}
 
Recall  for a ring $R$ their \textit{left finitistic flat dimension} as follows 

$$ l.\mathrm{FFD} (R) := \sup \{\fd (M): M \in \Modu (R) \mbox{ such that } M \in \Flat (R) ^{\gorro} \},$$
 we will make use of such notion in the following result.
 
 \begin{pro} \label{Jim-idea}
 For $1 < n \leq \infty$, the containment $\Flat (R) ^{\gorro}  \subseteq {^{\ortogonal}\GI _{\mathcal{FP}_n\mbox{-} Inj (R)} }$   is always true. Furthermore, if $\mathcal{FP}_n\mbox{-} Inj (R) \subseteq \Flat (R) ^{\gorro}  $ and $l.\mathrm{FFD}(R)$ are finite then the following equality is true
 $$^{\ortogonal} \GI _{\mathcal{FP}_n\mbox{-} Inj (R)} = \Flat (R) ^{\gorro}.$$
 \end{pro}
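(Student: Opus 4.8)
The plan is to mirror the structure of Proposition \ref{GP-ortogonal}, replacing the use of the projective-coresolved class by the Gorenstein injective class $\GI _{\mathcal{FP}_n\mbox{-} Inj (R)}$ and replacing Theorem \ref{SequencesStovi} by a dual statement on the $\GI$-side. First I would establish the easy containment $\Flat (R) ^{\gorro}  \subseteq {^{\ortogonal}\GI _{\mathcal{FP}_n\mbox{-} Inj (R)}}$. Every module $G\in\GI _{\mathcal{FP}_n\mbox{-} Inj (R)}$ is a cocycle of a totally acyclic complex of $\mathcal{FP}_n\mbox{-} Inj (R)$-modules, and since $\Inj(R)\subseteq \mathcal{FP}_n\mbox{-} Inj (R)$ one has $\Ext ^i_R(F,G)=0$ for every flat (indeed every projective) $F$ and $i>0$ — this is the standard fact that $\GI_{\mathcal{FP}_n\mbox{-} Inj(R)}\subseteq\Proj(R)^{\ortogonal}$, or may be quoted from the hereditary cotorsion pair $(^{\ortogonal}\GI _{\mathcal{FP}_n\mbox{-} Inj (R)},\GI _{\mathcal{FP}_n\mbox{-} Inj (R)})$ of \cite[Proposition 24]{Gill22} together with $\Proj(R)\subseteq{^{\ortogonal}\GI}$. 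A dimension shift along a finite flat resolution then upgrades $\Flat(R)\subseteq{^{\ortogonal}\GI _{\mathcal{FP}_n\mbox{-} Inj (R)}}$ to $\Flat(R)^{\gorro}\subseteq{^{\ortogonal}\GI _{\mathcal{FP}_n\mbox{-} Inj (R)}}$, using that $^{\ortogonal}\GI _{\mathcal{FP}_n\mbox{-} Inj (R)}$ is closed under the relevant extensions and kernels coming from the cotorsion pair being hereditary.

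For the reverse containment under the hypotheses $\mathcal{FP}_n\mbox{-} Inj (R)\subseteq\Flat(R)^{\gorro}$ and $l.\mathrm{FFD}(R)<\infty$, take $M\in{^{\ortogonal}\GI _{\mathcal{FP}_n\mbox{-} Inj (R)}}$. The idea is to resolve $M$ on the right by $\mathcal{FP}_n\mbox{-} Inj (R)$-modules. Since $(^{\ortogonal}\GI _{\mathcal{FP}_n\mbox{-} Inj (R)},\GI _{\mathcal{FP}_n\mbox{-} Inj (R)})$ is a (perfect, for $n>2$) cotorsion pair, and $\mathcal{FP}_n\mbox{-} Inj(R)\subseteq\GI _{\mathcal{FP}_n\mbox{-} Inj (R)}$, one can build a coresolution $0\to M\to E^0\to E^1\to\cdots$ with each $E^j\in\mathcal{FP}_n\mbox{-} Inj(R)$ (say via repeated special $\GI$-preenvelopes, noting the cosyzygies stay in $^{\ortogonal}\GI$ because the pair is hereditary and $M\in{^{\ortogonal}\GI}$). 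By hypothesis each $E^j\in\Flat(R)^{\gorro}$, so $\fd(E^j)\le l.\mathrm{FFD}(R)=:d<\infty$. A standard counting/shifting argument (analogous to the one used in Lemma \ref{tensor-exacto} or in Proposition \ref{Ikenaga}) along this coresolution then forces $\fd(M)\le d$, i.e.\ $M\in\Flat(R)^{\gorro}$: indeed the $d$-th cosyzygy $K$ satisfies $\Tor^R_i(-,K)\cong\Tor^R_{i+d}(-,M)$-type relations making it flat after finitely many steps, and then $M$ sits atop a finite exact sequence of flats.

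The step I expect to be the main obstacle is producing the right $\mathcal{FP}_n\mbox{-} Inj(R)$-coresolution of $M$ while keeping all cosyzygies inside $^{\ortogonal}\GI _{\mathcal{FP}_n\mbox{-} Inj (R)}$ — i.e.\ checking that the $\GI$-(pre)envelopes supplied by the cotorsion pair can be chosen with cokernels honestly in $\mathcal{FP}_n\mbox{-} Inj(R)$ rather than merely in $\GI _{\mathcal{FP}_n\mbox{-} Inj (R)}$; if only the latter is available, one must first run the finite-dimensional version of the $\GI$-analogue of Theorem \ref{SequencesStovi} (the dual "$0\to M'\to I\to G\to 0$ with $\id$-bounded $I$" statement) to peel off the $\mathcal{FP}_n\mbox{-} Inj$-part, and here the finiteness of $l.\mathrm{FFD}(R)$ and of $\id(\mathcal{FP}_n\mbox{-}Flat(R))$-type quantities from Proposition \ref{Globales-iguales} are what make the induction terminate. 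Once the coresolution is in hand, the flat-dimension bound is routine bookkeeping.
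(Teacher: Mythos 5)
Your reverse containment does not go through, and the obstacle you flag is not the only problem. First, the coresolution $0\to M\to E^0\to E^1\to\cdots$ with $E^j\in \mathcal{FP}_n\mbox{-}Inj(R)$ cannot be produced from the cotorsion pair $(^{\ortogonal}\GI _{\mathcal{FP}_n\mbox{-} Inj (R)},\GI _{\mathcal{FP}_n\mbox{-} Inj (R)})$: special preenvelopes land in $\GI _{\mathcal{FP}_n\mbox{-} Inj (R)}$, and a Gorenstein $\mathcal{FP}_n$-injective module has no reason to lie in $\mathcal{FP}_n\mbox{-}Inj(R)$ or to have finite flat dimension, so the hypothesis $\mathcal{FP}_n\mbox{-}Inj(R)\subseteq\Flat(R)^{\gorro}$ never gets applied. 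Second, even granting such a coresolution, the final step is wrong: for an infinite \emph{right} coresolution by modules of flat dimension $\le d$ the long exact sequences only exhibit $\Tor^R_i(B,M)$ (for $i>d$) as a quotient of $\Tor^R_{i+1}(B,C^1)$, and iterating pushes the degree up along the cosyzygies without ever terminating; the isomorphisms $\Tor^R_i(-,K)\cong\Tor^R_{i+d}(-,M)$ you invoke are the ones for left resolutions by flats, i.e.\ they presuppose the finiteness of $\fd(M)$ that you are trying to prove. The paper's argument is structurally different and this is where $l.\mathrm{FFD}(R)<\infty$ really enters: by G\"obel--Trlifaj, $(\Flat(R)^{\gorro},(\Flat(R)^{\gorro})^{\ortogonal})$ is a complete hereditary cotorsion pair whose kernel is exactly $\Inj(R)$ (using $\Inj(R)\subseteq\mathcal{FP}_n\mbox{-}Inj(R)\subseteq\Flat(R)^{\gorro}$); iterating its special precovers yields, for any $F\in(\Flat(R)^{\gorro})^{\ortogonal}$, a left \emph{injective} resolution that is $\Hom_R(\Flat(R)^{\gorro},-)$-acyclic, whence $(\Flat(R)^{\gorro})^{\ortogonal}\subseteq\GI _{\mathcal{FP}_n\mbox{-} Inj (R)}$ and therefore $^{\ortogonal}\GI _{\mathcal{FP}_n\mbox{-} Inj (R)}\subseteq{^{\ortogonal}((\Flat(R)^{\gorro})^{\ortogonal})}=\Flat(R)^{\gorro}$.

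There is also a gap in your easy containment. The vanishing $\Ext^i_R(F,G)=0$ for $F$ flat and $G\in\GI _{\mathcal{FP}_n\mbox{-} Inj (R)}$ does \emph{not} follow from $\Inj(R)\subseteq\mathcal{FP}_n\mbox{-}Inj(R)$ nor from $\Proj(R)\subseteq{^{\ortogonal}\GI _{\mathcal{FP}_n\mbox{-} Inj (R)}}$ (the latter holds for any class and says nothing about non-projective flat modules); the totally acyclic complex defining $G$ only gives $\Ext^i_R(E,G)=0$ for $E\in\mathcal{FP}_n\mbox{-}Inj(R)$. The missing ingredient is \v{S}t'ov\'ich\v{e}k's theorem that every Gorenstein injective module is cotorsion, which is what the paper cites to get $\Ext^i_R(\Flat(R),\GI _{\mathcal{FP}_n\mbox{-} Inj (R)})=0$ before dimension shifting along the finite flat resolution. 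With that citation the first half is fine; the second half needs to be redone along the lines above.
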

 
 \begin{proof}
 Let be $F \in \Flat (R) ^{\gorro}$ and $G \in \GI _{\mathcal{FP}_n\mbox{-} Inj (R)}$. There exists  exact sequences 
 $$0 \to F_j \to  \cdots \to F_0 \to F \to 0  \mbox{ and } 0 \to G' \to I_{j-1} \to \cdots \to I_0 \to G \to 0$$
 such that  $F_i \in \Flat (R)$ for all $i \in \{0,1,\dots ,j\}$ where $j := \fd (F)$ and where $G' \in \GI _{\mathcal{FP}_n\mbox{-} Inj (R)}$  and $I_{i} \in \Inj (R)$ for all $i \in \{0,1 , \dots , j-1\}$.  From \cite[Corollary 5.9]{Stovi14} we know that every Gorenstein injective $R$-module is cotorsion, in particular $\Ext ^{i} _{R} (\Flat (R) , \GI _{\mathcal{FP}_n\mbox{-} Inj (R)}) =0$ for all $i>0$. By shift dimension we get 
 $$\Ext ^{i} _R (F,G) \cong \Ext ^{i+j} _R (F,G') \cong \Ext ^{i} _R (F_j ,G') =0$$
 this is $\Ext _R ^{i} (\Flat (R) ^{\gorro}, \GI _{\mathcal{FP}_n\mbox{-} Inj (R)}) =0$ for all $i>0$. Therefore $\Flat (R) ^{\gorro} \subseteq {^{\ortogonal} \GI _{\mathcal{FP}_n\mbox{-} Inj (R)}}$. 
 
 Now  let us define  $k :=  l.\mathrm{FFD} (R)$, and note that $\Inj (R)\subseteq \Flat (R) ^{\gorro} = \Flat (R) ^{\gorro} _k  $. By \cite[Theorem 4.1.3]{Gobel} the pair $(\Flat (R) ^{\gorro} , (\Flat (R) ^{\gorro})^{\ortogonal})$ is a complete and hereditary  cotorsion pair such that $\Inj (R) \subseteq \Flat (R) ^{\gorro} \cap  (\Flat (R) ^{\gorro})^{\ortogonal}$. Furthermore, such cotorsion pair is injective. To see this take $M \in \Flat (R) ^{\gorro} \cap  (\Flat (R) ^{\gorro})^{\ortogonal}$, always  there is an exact sequence $0 \to M \to I \to C \to 0$ with $I \in \Inj (R)$. Since $I, M \in \Flat (R) ^{\gorro} $ it follows that $C  \in \Flat (R) ^{\gorro} $, in consequence this sequence splits and thus $M \in \Inj (R)$.  We will use this fact to show that $(\Flat  (R)^{\gorro}) ^{\ortogonal} \subseteq \GI_{\mathcal{FP}_n\mbox{-} Inj (R)}$. 
 
 Take $F \in (\Flat (R) ^{\gorro}) ^{\ortogonal} \subseteq (\mathcal{FP}_n\mbox{-} Inj (R)) ^{\ortogonal}$ by the dual of \cite[Proposition 3.16]{BMS}, we only need to construct a left resolution by injectives which still $\Hom _R (\mathcal{FP}_n\mbox{-} Inj (R),-)$-acyclic. We know that there exists an exact sequence $0 \to F_0 \to J_0 \to F \to 0$, with $J_0 \in \Flat (R) ^{\gorro}$ and $F _0 \in (\Flat (R) ^{\gorro})^{\ortogonal}$ thus $J_0 \in \Flat (R) ^{\gorro} \cap  (\Flat (R) ^{\gorro})^{\ortogonal} = \Inj (R)$. Then we can construct an exact complex by injectives $ \cdots \to J_1 \to J_0 \to F \to 0$  with $\Coker(J_{i+1} \to J_i)\in  (\Flat (R)^{\gorro}) ^{\ortogonal} $. This means that such complex is $\Hom _{R} ((\Flat (R) ^{\gorro})  , -)$-acyclic, in particular is $\Hom _{R} (\mathcal{FP}_n\mbox{-} Inj (R),-)$-acyclic.
 
 Now the containment $(\Flat  (R)^{\gorro}) ^{\ortogonal} \subseteq \GI_{\mathcal{FP}_n\mbox{-}  Inj (R)}$ and the fact that $(\Flat (R) ^{\gorro} , (\Flat (R) ^{\gorro})^{\ortogonal})$ is an hereditary cotorsion pair give us that $$ ^{\ortogonal}  ( \GI_{\mathcal{FP}_n\mbox{-} Inj (R)}) \subseteq {^{\ortogonal} ((\Flat (R)^{\gorro}) }^{\ortogonal}) = \Flat (R) ^{\gorro} ,$$
 which ends the proof.
  \end{proof}
 The following result summarises what we have developed so far and will allow us to address the main concept of the following section.
\begin{pro} \label{Ultimo}
Let $R$ be a ring and  fix $2< n \leq \infty$. Assume that   $\mathrm{l.w.Ggl}_{\mathcal{FP}_n\mbox{-}Inj (R^{\op})}(R)$   the Gorenstein weak global dimension respect  the class  $\GF _{\mathcal{FP}_n\mbox{-}Inj (R^{\op})}$ is finite and $\mathcal{FP}_n\mbox{-} Inj (R) \subseteq \Flat (R) ^{\gorro}$. Then the following statements are true.
\begin{itemize}
\item[(i)] The dimension $\mathrm{r.w.Ggl}_{\mathcal{FP}_n\mbox{-}Inj (R)}(R^{\op})$ is finite and coincide with $\mathrm{l.w.Ggl}_{\mathcal{FP}_n\mbox{-}Inj (R^{\op})}(R)$.
\item[(ii)] There exists an hereditary and complete cotorsion triple in $\Modu (R)$
$$(\GP _{\mathcal{FP}_n\mbox{-} Flat (R)}  , \Flat (R) ^{\gorro}  , \GI _{\mathcal{FP}_n\mbox{-} Inj (R)}  ).$$ 
\item[(iii)] The functor $\Hom _R (-,-)$ is right balanced  by $\GP _{\mathcal{FP}_n\mbox{-} Flat (R)}  \times  \GI _{\mathcal{FP}_n\mbox{-} Inj (R)} $ over the whole category $\Modu (R)$.
\item[(iv)] If $\pd (\Inj (R)) < \infty$, then there exists a triangle equivalence 
$$\mathbf{K} (\GP_{\mathcal{FP}_n\mbox{-} Flat (R)} ) \cong \mathbf{K} (\GI _{\mathcal{FP}_n\mbox{-} Inj (R)} ).$$
\end{itemize}
\end{pro}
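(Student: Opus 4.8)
The plan is to prove the four items in order, with item (i) carrying most of the weight; the strategy for (i) is to pin each relative Gorenstein weak global dimension to $\fd(\Inj(-))$ and then invoke the symmetry of the \emph{classical} Gorenstein weak global dimension. Write $d:=\mathrm{l.w.Ggl}_{\mathcal{FP}_n\mbox{-}Inj(R^{\op})}(R)<\infty$. Since $\Inj(R^{\op})\subseteq\mathcal{FP}_n\mbox{-}Inj(R^{\op})$ and $\GF_{\mathcal{FP}_n\mbox{-}Inj(R^{\op})}$ is closed under extensions (Remark \ref{Remark1}(1)), Corollary \ref{caracteriza} gives $\fd(\mathcal{FP}_n\mbox{-}Inj(R^{\op}))<\infty$ and $\fd(\Inj(R))<\infty$, and hence $\fd(\Inj(R^{\op}))\le\fd(\mathcal{FP}_n\mbox{-}Inj(R^{\op}))<\infty$. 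The key step is to extract $l.\mathrm{FFD}(R)\le d$: because $d<\infty$ one has $\GF^{\gorro}_{\mathcal{FP}_n\mbox{-}Inj(R^{\op})}=\Modu(R)$, so $\GP_{\mathcal{FP}_n\mbox{-}Flat(R)}^{\ortogonal}=\Flat(R)^{\gorro}$ by Proposition \ref{GP-ortogonal}; given $M\in\Flat(R)^{\gorro}$, Theorem \ref{SequencesStovi} supplies an exact sequence $0\to M\to F\to N\to 0$ with $\fd(F)\le\Gfd_{\mathcal{FP}_n\mbox{-}Inj(R^{\op})}(M)\le d$ and $N\in\GP_{\mathcal{FP}_n\mbox{-}Flat(R)}$, and since $M\in\GP_{\mathcal{FP}_n\mbox{-}Flat(R)}^{\ortogonal}$ this sequence splits, whence $\fd(M)\le d$. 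Thus $l.\mathrm{FFD}(R)\le d<\infty$, and with the hypothesis $\mathcal{FP}_n\mbox{-}Inj(R)\subseteq\Flat(R)^{\gorro}$ also $\fd(\mathcal{FP}_n\mbox{-}Inj(R))\le l.\mathrm{FFD}(R)<\infty$.

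Next, since $\Inj(R)\subseteq\Flat(R)^{\gorro}$ we get $\fd(\Inj(R))\le l.\mathrm{FFD}(R)\le d$, while Proposition \ref{Ikenaga} (applied with $\mathcal{B}=\mathcal{FP}_n\mbox{-}Inj(R^{\op})$) gives $d\le\fd(\Inj(R))$; hence $d=\fd(\Inj(R))=l.\mathrm{FFD}(R)$. The same reasoning over $R^{\op}$ is now legitimate: the finiteness facts just obtained feed Proposition \ref{Ikenaga} over $R^{\op}$ to give $\mathrm{r.w.Ggl}_{\mathcal{FP}_n\mbox{-}Inj(R)}(R^{\op})<\infty$, and repeating the argument yields $\mathrm{r.w.Ggl}_{\mathcal{FP}_n\mbox{-}Inj(R)}(R^{\op})=\fd(\Inj(R^{\op}))$. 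Finally, $\fd(\Inj(R))$ and $\fd(\Inj(R^{\op}))$ being finite, the classical Gorenstein weak global dimension satisfies $\mathrm{l.w.Ggl}_{\Inj(R^{\op})}(R)=\fd(\Inj(R))$ and $\mathrm{r.w.Ggl}_{\Inj(R)}(R^{\op})=\fd(\Inj(R^{\op}))$ (the finite-case identity of Emmanouil \cite{Emmanouil}), and these two classical dimensions agree by the symmetry theorem \cite[Theorem 2.4]{Chris-Estrada}; therefore $\fd(\Inj(R))=\fd(\Inj(R^{\op}))$ and (i) follows.

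For (ii) I would assemble the triple from Propositions \ref{GP-ortogonal} and \ref{Jim-idea}. As noted, $\GF^{\gorro}_{\mathcal{FP}_n\mbox{-}Inj(R^{\op})}=\Modu(R)$, so Proposition \ref{GP-ortogonal} gives $\GP_{\mathcal{FP}_n\mbox{-}Flat(R)}^{\ortogonal}=\Flat(R)^{\gorro}$; combined with the hereditary complete cotorsion pair $(\GP_{\mathcal{FP}_n\mbox{-}Flat(R)},\GP_{\mathcal{FP}_n\mbox{-}Flat(R)}^{\ortogonal})$ of \cite[Theorem 3.8]{Alina20} (valid since $n>2$), this shows $(\GP_{\mathcal{FP}_n\mbox{-}Flat(R)},\Flat(R)^{\gorro})$ is a hereditary complete cotorsion pair. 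Since $\mathcal{FP}_n\mbox{-}Inj(R)\subseteq\Flat(R)^{\gorro}$ by hypothesis and $l.\mathrm{FFD}(R)<\infty$ by the first paragraph, Proposition \ref{Jim-idea} (whose proof proceeds via \cite[Theorem 4.1.3]{Gobel}) gives that $(\Flat(R)^{\gorro},\GI_{\mathcal{FP}_n\mbox{-}Inj(R)})$ is a hereditary complete cotorsion pair. Hence $(\GP_{\mathcal{FP}_n\mbox{-}Flat(R)},\Flat(R)^{\gorro},\GI_{\mathcal{FP}_n\mbox{-}Inj(R)})$ is a hereditary complete cotorsion triple, which is (ii).

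Item (iii) then follows from X.-W. Chen's principle \cite{Chen} that a hereditary complete cotorsion triple $(\mathcal{X},\mathcal{W},\mathcal{Y})$ yields a balanced pair $(\mathcal{X},\mathcal{Y})$: completeness furnishes $\GP_{\mathcal{FP}_n\mbox{-}Flat(R)}$-proper resolutions and $\GI_{\mathcal{FP}_n\mbox{-}Inj(R)}$-proper coresolutions of every module, and heredity — all syzygies and cosyzygies land in $\Flat(R)^{\gorro}$, which is $\Ext$-orthogonal to both outer classes — makes them compute $\Hom_R(-,-)$ compatibly, so $\Hom_R(-,-)$ is right balanced by $\GP_{\mathcal{FP}_n\mbox{-}Flat(R)}\times\GI_{\mathcal{FP}_n\mbox{-}Inj(R)}$. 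For (iv), the extra hypothesis $\pd(\Inj(R))<\infty$, together with $\fd(\mathcal{FP}_n\mbox{-}Inj(R^{\op}))<\infty$ from the first paragraph, feeds Proposition \ref{Globales-iguales}, so $\mathrm{gl.GPD}_{\mathcal{FP}_n\mbox{-}Flat(R)}(R)$ and $\mathrm{gl.GID}_{\mathcal{FP}_n\mbox{-}Inj(R)}(R)$ are both finite; this bounds the levelwise $\GP_{\mathcal{FP}_n\mbox{-}Flat(R)}$-resolution and levelwise $\GI_{\mathcal{FP}_n\mbox{-}Inj(R)}$-coresolution functors so they are well defined on the homotopy categories, and the balanced pair of (iii) makes them mutually quasi-inverse, yielding the triangle equivalence $\mathbf{K}(\GP_{\mathcal{FP}_n\mbox{-}Flat(R)})\cong\mathbf{K}(\GI_{\mathcal{FP}_n\mbox{-}Inj(R)})$. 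I expect the real obstacle to be in (i): $l.\mathrm{FFD}(R)<\infty$ is not among the hypotheses and must be produced by the splitting trick above (combining Theorem \ref{SequencesStovi} with Proposition \ref{GP-ortogonal}), and the two relative dimensions must be channelled through $\fd(\Inj(-))$ before \cite[Theorem 2.4]{Chris-Estrada} can be applied — symmetry does not transfer directly between the relative versions, since a priori $\fd(\mathcal{FP}_n\mbox{-}Inj(R))$ and $\fd(\mathcal{FP}_n\mbox{-}Inj(R^{\op}))$ need not coincide.
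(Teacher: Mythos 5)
Your proposal is correct, and for items (ii)--(iv) it runs along the same lines as the paper: assemble the cotorsion triple from Propositions \ref{GP-ortogonal} and \ref{Jim-idea} together with the completeness results of \cite{Alina20} and \cite{Gill22}, then invoke Chen's results for the balanced pair and the triangle equivalence, with Proposition \ref{Globales-iguales} supplying the finiteness needed in (iv). For item (i), however, you take a genuinely different route to the pivotal fact $l.\mathrm{FFD}(R)<\infty$. The paper obtains it externally: from $\GF_{\mathcal{FP}_n\mbox{-}Inj(R^{\op})}\subseteq\GF_{\Inj(R^{\op})}$ the \emph{classical} Gorenstein weak global dimension is finite, and then \cite[Theorem 2.4]{Chris-Estrada} combined with \cite[Theorem 5.3]{Emmanouil} delivers $l.\mathrm{FFD}(R)<\infty$ and $\fd(\Inj(R^{\op}))<\infty$ in one stroke. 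You instead produce the sharper bound $l.\mathrm{FFD}(R)\le d$ internally, via the splitting trick: for $M\in\Flat(R)^{\gorro}\subseteq\GP_{\mathcal{FP}_n\mbox{-}Flat(R)}^{\ortogonal}$, the sequence of Theorem \ref{SequencesStovi} splits, so $\fd(M)\le d$ --- this is exactly the mechanism of the second half of the proof of Proposition \ref{GP-ortogonal}, now run for arbitrary $M$ of finite flat dimension. What this buys is a quantitative identity $d=\fd(\Inj(R))=l.\mathrm{FFD}(R)$ and, in particular, an actual argument for the ``coincide'' clause of (i), which the paper's proof asserts but never spells out; the dependence on \cite{Chris-Estrada} and \cite{Emmanouil} is pushed to the single final step $\fd(\Inj(R))=\fd(\Inj(R^{\op}))$. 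One point you should make explicit: when you ``repeat the argument over $R^{\op}$'' to get the reverse inequality $\fd(\Inj(R^{\op}))\le\mathrm{r.w.Ggl}_{\mathcal{FP}_n\mbox{-}Inj(R)}(R^{\op})$, the needed hypothesis $\mathcal{FP}_n\mbox{-}Inj(R^{\op})\subseteq\Flat(R^{\op})^{\gorro}$ is not assumed but does follow from $\fd(\mathcal{FP}_n\mbox{-}Inj(R^{\op}))<\infty$, which you secured at the outset via Corollary \ref{caracteriza}; with that observation (and the finiteness of the right-hand relative dimension established first, as you do), the repetition is legitimate and the proof is complete.
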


\begin{proof}
Assume that  $\mathrm{l.w.Ggl}_{\mathcal{FP}_n\mbox{-}Inj (R^{\op})}(R)$ is finite.  Since $\Inj (R^{\op}) \subseteq \mathcal{FP}_n\mbox{-}Inj (R^{\op})$ we have that $\GF _{\mathcal{FP}_n\mbox{-}Inj (R^{\op})} \subseteq \GF _{\Inj (R^{\op})}$, in consequence their  Gorenstein weak global dimensions are such that $\mathrm{l.w.Ggl}_{Inj (R^{\op})}(R) \leq \mathrm{l.w.Ggl}_{\mathcal{FP}_n\mbox{-}Inj (R^{\op})}(R)$. This implies by \cite[Theorem 2.4]{Chris-Estrada} and \cite[Theorem 5.3 (iii)]{Emmanouil} that $m:= l.\mathrm{FFD} (R)$ is finite. Then, we obtain from the containments  $\mathcal{FP}_n\mbox{-} Inj (R) \subseteq \Flat (R) ^{\gorro} \subseteq \Flat (R) ^{\gorro} _m$, that  $\fd (\mathcal{FP}_n\mbox{-} Inj (R)) \leq m$. By \cite[Theorem 5.3 (ii)]{Emmanouil} also know that $\fd (\Inj (R ^{\op}))$ is finite. Therefore from the dual of Proposition \ref{Ikenaga} we have that $\mathrm{r.w.Ggl}_{\mathcal{FP}_n\mbox{-}Inj (R)}(R^{\op})$ is finite. Note that we have the conditions of the Proposition \ref{GP-ortogonal} and \ref{Jim-idea}, thus we also obtain (ii). The assertion (iii) follows directly from   \cite[Proposition 2.6]{Chen}. Now assume that $\pd (\Inj (R)) < \infty$, then the result follows from  Proposition \ref{Globales-iguales} and \cite[Theorem 4.1]{Chen}.

\end{proof}

\begin{rk}\label{RK01} Note that every Gorenstein $n$-coherent ring $R$, satisfies the conditions of Proposition  \ref{Ultimo}.  Furthermore (ii) and (iii) hold over $\Modu (R) $ and $\Modu (R^{\op})$.
\end{rk}

\section{$(\Le, \A)$-Virtually Gorenstein Rings}

We recall from Belingliannis and Reiten \cite{BeRe} that  an artin algebra $R$ is called \textit{Virtually Gorenstein} if $\GP(R) ^{\ortogonal _1} = {^{\ortogonal _1} \GI (R) }$, where $\GP (R)$ denotes the classical Gorenstein projective $R$-modules (resp. $\GI (R)$ denotes the classical Gorenstein injective $R$-modules).  Virtually Gorenstein algebras provide a common generalization of Gorenstein algebras and Cohen-Macaulay type (also called algebras of finite representation). It is known that the Gorenstein symmetric conjecture holds for Virtually Gorenstein Algebras \cite[Theorem 11.3]{Bel}.   Zareh-Khoshchehreh, Asgharzadeh and Divaani-Aazar \cite{Iranies} extended the notion of \textit{Virtually} to the setting of commutative noetherian rings of finite Krull dimension. It is known   that every commutative Gorenstein ring with finite Krull dimension is virtually Gorenstein \cite[Example 3.13(i)]{Iranies}.

In this section we extend the above notions of virtually Gorenstein rings  to the setting of relative Gorenstein $R$-modules with respect to a duality pair $(\Le, \A)$.

\begin{defi}\cite[Definition 2.1]{HJ}
A \textbf{duality pair} over a ring $R$ is a pair $(\Le, \A)$, where $\Le$ is a class of $R$-modules and $\A$ is a class of $R^{\op}$-modules, satisfying the following conditions:
\begin{itemize}
\item[(1)] $M \in \Le$ if and only if $M ^{+} \in \A$, where $M ^{+} := \Hom _{\mathbb{Z}} (M, \mathbb{Q/Z})$.
\item[(2)] $\A$ is closed under direct summands and finite direct sums.
\end{itemize}
\end{defi}

By the Lambek's Theorem, the more natural example of a duality pair is when we consider the class $\F (R)$ of all flat $R$-modules and the class $\I (R^{\op})$ of all injective $R^{\op}$-modules.

Recently Gillespie considers in \cite{Gill19} classes of  Gorenstein projective, Gorenstein flat  $R$-modules and Gorenstein injective $R^{\op}$-modules, relative to a duality pair $(\Le, \A)$ as follows. 

Let $M$ be a $R$-module and $N$ be a $R^{\op}$-module.
\begin{itemize}
\item $M$ is \textbf{Gorenstein $(\Le, \A)$-projective} if $M = Z_0 (\mathbf{P})$ for some exact complex of  projective  $R$-modules $\mathbf{P}$ for which $\Hom _{R} (\mathbf{P}, L)$ is acyclic for all $L \in \Le$.
\item $M$ is \textbf{Gorenstein $(\Le, \A)$-flat} if $M = Z_0 (\mathbf{F})$ for some exact complex of flat $R$-modules for which $A \otimes _R \mathbf{F}$ is acyclic for all $A \in \A$.
\item $N$ is \textbf{Gorenstein $(\Le, \A)$-injective} if $N = Z_0 (\mathbf{I})$ for some exact complex of injective $R^{\op}$-modules $\mathbf{
I}$ for which $\Hom _{R} (A,\mathbf{I})$ is acyclic for all $A \in \A$.
\end{itemize}

We denote the previous classes of $R$-modules  by  $\GP _{ \Le}$, $\GF _{ \A}$ respectively, and the  class of $R^{\op}$-modules by $\GI _{\A}$. We know by \cite[Theorems 5.5 and 5.6]{BP} that the pair 
$$( \mathcal{FP}_n\mbox{-}Flat (R),\mathcal{FP}_n\mbox{-}Inj (R^{\op}))$$ 
of  $\mathcal{FP}_n$-flat  $R$-modules and $\mathcal{FP}_n$-injective $R^{\op}$-modules form a duality pair in $\Modu (R)$. Since the pair $$( \mathcal{FP}_n\mbox{-}Flat (R^{\op}),\mathcal{FP}_n\mbox{-}Inj (R))$$ is also a duality pair in $\Modu (R ^{\op})$, then  we can consider the class of left $R$-modules  
$$ \GP _{ \mathcal{FP}_n\mbox{-}Flat (R) } ,\; \GI _{ \mathcal{FP}_n\mbox{-}Inj(R)} , \;\GF _{ \mathcal{FP}_n\mbox{-}Inj (R ^{\op})}.$$

Thus in the following definition we will consider a duality pair $(\Le, \A)$ in $\Modu (R)$ \textit{which can also be defined in $\Modu (R^{\op})$}   by say $(\tilde{\Le}, \tilde{\A})$, as above. 

\begin{defi}
Let $(\Le, \A )$ be a duality pair in $\Modu (R)$  which can also be defined in   $\Modu (R^{\op})$. We say that $R$ is \textbf{$(\Le , \A)$-Virtually Gorenstein ring} if 
$$\GP _{\Le} ^{\ortogonal _1} = {^{\ortogonal _1} \GI _{\tilde{\A}}}.$$
\end{defi}
Thus, when $(\Le, \A) = (\Flat (R), \Inj (R ^{\op}))$, the notion of $(\Le, \A)$-Virtually Gorenstein ring is precisely that given by Belingliannis and Reiten \cite{BeRe}, in the context of an artin algebra $R$. We have the following result, the proof of which follows from  Proposition  \ref{Ultimo} (ii).
\begin{pro} \label{Pro02}
 Let $R$ be a ring and  fix $2< n \leq \infty$. Assume that  the Gorenstein weak global dimension of $R$ respect  the class  $\GF _{\mathcal{FP}_n\mbox{-}Inj (R^{\op})}$ is finite and $\mathcal{FP}_n\mbox{-} Inj (R) \subseteq \Flat (R) ^{\gorro}$. Then $R$ is a $( \mathcal{FP}_n\mbox{-}Flat (R),\mathcal{FP}_n\mbox{-}Inj (R^{\op}))$-Virtually Gorenstein ring. 
\end{pro}

\begin{rk} Note that from Proposition \ref{Pro02} and Remark \ref{RK01}   every Gorenstein $n$-coherent ring $R$, is  particularly a $( \mathcal{FP}_n\mbox{-}Flat (R),\mathcal{FP}_n\mbox{-}Inj (R^{\op}))$-Virtually Gorenstein ring. 
\end{rk}
 The author is not sure whether the results contained in this paper can be reproduced for a  general duality pair $(\Le, \A)$. That is; What are the conditions on the pair  $(\Le, \A)$  (that can be defined in $\Modu (R ^{\op})$ by say $(\tilde{\Le}, \tilde{\A})$)  in addition to 
 
 $$\tilde{\A} \subseteq \Flat (R) ^{\gorro}  \mbox{ and } \mathrm{l.w.Ggl}_{\A}(R) <\infty, $$
  to guarantee that $R$ will be  a $(\Le, \A)$-Virtually Gorenstein ring? 
  
  In view of \cite[Theorem A6]{BGH} and  \cite[Remark 38]{Gill22} the author conjectures that  $(\Le, \A)$ must be symmetric (see \cite[Definition 2.4]{Gill19}) with $\Inj (R^{\op}) \subseteq \A$ and such that $(\Proj \GP _{\A} ,\Proj \GP _{\A} ^{\ortogonal _1} )$ is a complete cotorsion pair.

\bigskip

\textbf{Acknowledgements} The author want to thank to professor Raymundo Bautista, for several helpful discussions on the results of this article.\\

\textbf{Funding} The author was fully supported by a CONAHCyT posdoctoral fellowship CVU 443002, managed by the Universidad Michoacana de San Nicolas de Hidalgo at the Centro de Ciencias Matemáticas, UNAM.

\subsection*{Declarations}\; \\

\textbf{Ethical Approval}  Not applicable.\\

\textbf{Availability of data and materials} Not applicable.\\

\textbf{Competing interests} The authors declare no competing interests.


\end{document}